\newtheorem{thm}{Theorem}[section]
\newtheorem{cor}[thm]{Corollary}
\newtheorem{lemma}[thm]{Lemma}
\newtheorem{prop}[thm]{Proposition}
\theoremstyle{definition}
\newtheorem{defn}[thm]{Definition}
\newtheorem{eg}[thm]{Example}
\theoremstyle{remark}
\newtheorem{rem}[thm]{Remark}
\DeclareMathOperator{\Tr}{Tr}
\DeclareMathOperator{\real}{Re}
\DeclareMathOperator{\imag}{Im}
\newcommand{\source}{\operatorname{src}}
\newcommand{\target}{\operatorname{tar}}
\newcommand{\indep}{\perp \!\!\! \perp}
\newcommand{\N}{\mathbb{N}}
\newcommand{\R}{\mathbb{R}}
\newcommand{\C}{\mathbb{C}}
\newcommand{\sphr}{\mathbb{S}}
\newcommand{\pr}{\mathbb{P}}
\newcommand{\E}{\mathbb{E}}
\newcommand{\pto}{\stackrel{\pr}{\to}}
\newcommand{\aseq}{\stackrel{\op{a.s.}}{=}}
\newcommand{\op}[1]{\operatorname{#1}}
\newcommand{\mcal}[1]{\mathcal{#1}}
\newcommand{\msr}[1]{\mathscr{#1}}
\newcommand{\mbf}[1]{\mathbf{#1}}
\newcommand{\indc}[1]{\mathds{1}\left\{#1\right\}}
\newcommand{\wtilde}[1]{\widetilde{#1}}
\newcommand{\inn}[2]{\langle #1, #2 \rangle}
\newcommand{\deq}{\stackrel{d}{=}}
\newcommand{\norm}[1]{\lVert#1\rVert}
\newcommand{\snorm}[1]{\left\lVert#1\right\rVert}
\DeclareMathOperator{\matn}{Mat}
\DeclareMathOperator{\Wig}{Wigner}
\DeclareMathOperator{\rbm}{RBM}
\DeclareMathOperator*{\argmin}{arg\,min}
\DeclareRobustCommand{\SkipTocEntry}[5]{} 
\newcommand\thefontsize[1]{{#1 The current font size is: \f@size pt\par}}
\begin{document}
\author[Benson Au]{Benson Au}
\thanks{Department of Statistics, University of California, Berkeley, \href{mailto:bensonau@berkeley.edu}{bensonau@berkeley.edu}}
\address{Department of Statistics\\
         University of California, Berkeley\\
         367 Evans Hall \# 3860\\
         Berkeley, CA 94720-3860\\
         USA}
       \email{\href{mailto:bensonau@berkeley.edu}{bensonau@berkeley.edu}}
       
\title[Deformed random band matrices]{BBP phenomena for deformed random band matrices}
\date{\today}

\begin{abstract}\label{sec:abstract}
We study additive finite-rank perturbations of random periodic band matrices under the assumption that the nontrivial eigenvalues of the perturbation do not depend on the dimension. We establish the eigenvalue/eigenvector BBP transition in this model for band widths $b_N \gg N^\varepsilon$. Our analysis relies on moment method calculations for general vector states.
\end{abstract}

\maketitle
\tableofcontents

\section{Introduction}\label{sec:intro}

Understanding the spectral statistics of random matrices is a fundamental problem at the interface of mathematics, physics, and statistics. This confluence can already be observed in the classical Wigner ensemble, a mean-field model originally proposed by Wigner as a tractable proxy for the Hamiltonian of a large quantum system. In the intervening years, the definition of a Wigner matrix has become increasingly general. For concreteness, we state our working definition below.

\begin{defn}[Wigner matrix]\label{defn:wigner_matrix}
Let $(\mbf{X}_N(i, j) : 1 \leq i \leq j \leq N \in \N)$ be a family of independent random variables such that
\begin{enumerate}[label=(\roman*)]
\item \label{cond:off_diagonal} the off-diagonal entries $(i < j)$ are complex-valued, centered, and of variance $\sigma^2$;
\item \label{cond:diagonal} the diagonal entries $(i = j)$ are real-valued and of finite variance;
\item \label{cond:moments} we have a strong uniform control on the moments: for any $m \in \N$,
  \begin{equation}\label{eq:finite_moments}
    \sup_{1 \leq i \leq j \leq N \in \N} \E[|\mbf{X}_N(i, j)|^m] < \infty.
  \end{equation}
\end{enumerate}
We call the random Hermitian matrix defined by $\mbf{W}_N(i, j) = \frac{1}{\sqrt{N}} \mbf{X}_N(i, j)$ a \emph{normalized Wigner matrix of variance $\sigma^2$} and use the notation $\mbf{W}_N \deq \Wig(N, \sigma^2)$. When the context is clear, we simply refer to a Wigner matrix. Hereafter, when we refer to a Wigner matrix $\mbf{W}_N$, we implicitly refer to a sequence of Wigner matrices $(\mbf{W}_N)_{N \in \N}$.
\end{defn}

Being Hermitian, we can order the eigenvalues of a Wigner matrix $\lambda_1(\mbf{W}_N) \leq \cdots \leq \lambda_N(\mbf{W}_N)$. The natural question of the limiting distribution of these eigenvalues was settled by Wigner under some simplifying assumptions on the distributions of the entries \cite{Wig55,Wig58} and by Pastur in the general case with the moment assumption \ref{cond:moments} replaced by the much weaker Lindeberg condition \cite{Pas72} (see also \cite[Theorem 2.9]{BS10}): if $\mbf{W}_N \deq \Wig(N, \sigma^2)$, then the empirical spectral distribution $\mu_{\mbf{W}_N} = \frac{1}{N} \sum_{k \in [N]} \delta_{\lambda_k(\mbf{W}_N)}$ converges weakly almost surely to the semicircle distribution $\mu_{\mcal{SC},\sigma^2}(dx) = \frac{\indc{|x| \leq 2\sigma}}{2\pi\sigma^2}\sqrt{4\sigma^2-x^2} \, dx$.

The semicircle law governs the \emph{global} behavior of the eigenvalues; however, the physical interpretation primarily concerns the \emph{local} eigenvalue statistics of the matrix, in particular their conjectured universality \cite{Wig67}. Eigenvector statistics, as pioneered by Anderson \cite{And58}, are a related line of inquiry. In particular, the Anderson tight binding model exhibits localized eigenfunctions \cite{FS83,FMSS85,AM93,Aiz94} and Poisson local eigenvalue statistics \cite{Min96}. Following a long line of work, the universality phenomenon for Wigner matrices is now well-understood \cite{BGK17,EY17}: some highlights include GOE/GUE universality for the local eigenvalue statistics and complete delocalization of the eigenvectors. Random band matrices emerge as a natural interpolative model to study the transition between these two phases \cite{Bou18}.

\begin{defn}[Random band matrix]\label{defn:rbm}
Let $(\mbf{X}_N(i, j) : 1 \leq i \leq j \leq N \in \N)$ be as in Definition \ref{defn:wigner_matrix}. For band widths $(b_N : N \in \N) \subset \N_0$, we define
  \begin{equation*}
    \xi_N = \min\{2b_N + 1, N\}.
  \end{equation*}
Similarly, we define the $N$-periodic distance
  \begin{equation*}
    |i-j|_N = \min\{|i-j|, N - |i-j|\}.
  \end{equation*}
A \emph{periodic $(0,1)$-band matrix of band width $b_N$} is a real symmetric matrix $\mbf{B}_N$ with entries
\begin{equation}\label{eq:band_matrix}
        \mbf{B}_N(i, j) = \indc{|i - j|_N \leq b_N}.
\end{equation}
We call the random Hermitian matrix defined by
  \begin{equation}\label{eq:rbm}
    \mbf{\Xi}_N = \frac{1}{\sqrt{\xi_N}} \mbf{B}_N \circ \mbf{X}_N
  \end{equation}
a \emph{normalized periodic random band matrix of variance $\sigma^2$ and band width $b_N$} and use the notation $\mbf{\Xi}_N \deq \rbm(N, \sigma^2, b_N)$. Here, $\circ$ denotes the entrywise product. When the context is clear, we simply refer to a random band matrix. Hereafter, when we refer to a random band matrix $\mbf{\Xi}_N$, we implicitly refer to a sequence of random band matrices $(\mbf{\Xi}_N)_{N \in \N}$.
\end{defn}

A long-standing conjecture proposes a dichotomy for random band matrices: delocalization and Wigner local statistics for large band widths; localization and Poisson local statistics for small band widths; and a sharp transition around the critical band width rate $b_N \asymp \sqrt{N}$ \cite{CMI90,FM91}. Recent progress has established delocalization (in fact, quantum unique ergodicity) for $b_N \gg N^{3/4}$ \cite{BYY20} and localization for $b_N \ll N^{1/4}$ \cite{CPSS22,CS22}.

Wigner matrices also appear in statistics, where deformed versions are studied as a prototype of a spiked model. Here, spectral properties can be used to differentiate the spiked model from the null case \cite{BBP05,Pec06}, a phenomenon known as the BBP transition. To explain this transition, we first review the relevant results in the null case of a Wigner matrix. We assume that the off-diagonal entries in \ref{cond:off_diagonal} are i.i.d.\@ and similarly for the diagonal entries in \ref{cond:diagonal}, but we no longer assume the existence of moments as in \ref{cond:moments}. Recall that
\begin{enumerate}[leftmargin=*, label = (W\arabic*)]
    \item \label{wigner_edge} The extremal eigenvalues are known to converge to the edge of the support of the semicircle distribution iff the off-diagonal entries have a finite fourth moment $\E[|\mbf{X}_N(1, 2)|^4] < \infty$ \cite{BY88,BS10}. So, for example, $\lim_{N \to \infty} \lambda_1(\mbf{W}_N) \aseq -2 \sigma$; however, without a finite fourth moment, $\liminf_{N \to \infty} \lambda_1(\mbf{W}_N) \aseq -\infty$.
    \item \label{wigner_fluctuations} The fluctuations of the extremal eigenvalues in the GOE/GUE were found in \cite{TW94,TW96} and shown to be universal in \cite{Sos99} assuming sub-Gaussianity of the entries. The optimal rate of decay for universality was found in \cite{LY14} to be $\lim_{s \to \infty} s^4\pr(|\mbf{X}_N(1, 2)| > s) = 0$. For example, $\lim_{N \to \infty} \pr(N^{2/3}(\lambda_N(\mbf{W}_N) - 2\sigma) \leq s\sigma) = F_\beta(s)$, where $F_\beta$ is the CDF of the Tracy-Widom distribution of parameter $\beta$. In particular, we note the $N^{-2/3}$ scale of the fluctuations.
    \item \label{wigner_eigenvectors} As noted before, the ($\ell^2$-normalized) eigenvectors $(\mbf{w}_N^{(k)})_{k \in [N]}$ of $\mbf{W}_N$ are completely delocalized. For example, if we assume finite moments as in \ref{cond:moments}, then for any $\varepsilon, D > 0$, $\pr(\max_{k \in [N]} \norm{\mbf{w}_N^{(k)}}_\infty \geq N^{\varepsilon - 1/2}) \leq N^{-D}$ \cite[Theorem 1.2.10]{BGK17}.   
\end{enumerate}

The spiked Wigner model introduces an additive perturbation $\mbf{A}_N$ to our matrix. We assume that $\mbf{A}_N$ is self-adjoint and of fixed rank $r_N \equiv r$. We further assume that the nontrivial eigenvalues of $\mbf{A}_N$ do not depend on $N$: we denote them by $\theta_1 \leq \cdots \leq \theta_r$. Since the perturbation is finite-rank, the empirical spectral distribution of the spiked model $\mbf{M}_N = \mbf{W}_N + \mbf{A}_N$ still converges to the semicircle distribution. The presence of $\mbf{A}_N$ can however be detected by the extremal spectral statistics (cf. \ref{wigner_edge}-\ref{wigner_eigenvectors}). Recall that
\begin{enumerate}[leftmargin=*, label=(S\arabic*)]
    \item \label{spiked_edge} Each eigenvalue $\theta_s$ of $\mbf{A}_N$ such that $|\theta_s| > \sigma$ creates an outlying eigenvalue in $\mbf{M}_N$. In particular, if $L_{-\sigma} = \#(\{s \in [r]: \theta_s < -\sigma\})$ and $L_{+\sigma} = \#(\{s \in [r]: \theta_s > \sigma\})$, then
    \begin{align*}
        \lim_{N \to \infty} \lambda_k(\mbf{M}_N) &\aseq \theta_k + \frac{\sigma^2}{\theta_k} < -2\sigma, \qquad \forall k \in [L_{-\sigma}]; \\
        \lim_{N \to \infty} \lambda_{L_{-\sigma} + 1}(\mbf{M}_N) &\aseq -2\sigma; \\
        \lim_{N \to \infty} \lambda_{N+1-k}(\mbf{M}_N) &\aseq \theta_{r+1-k} + \frac{\sigma^2}{\theta_{r+1-k}} > 2\sigma, \qquad \forall k \in [L_{+\sigma}]; \\
        \lim_{N \to \infty} \lambda_{N - L_{+\sigma}}(\mbf{M}_N) &\aseq 2\sigma.
    \end{align*}
    \item \label{spiked_fluctuations} The fluctuations of the outlying eigenvalues are nonuniversal. We omit the precise statement of the result in this case and simply note the $N^{-1/2}$ scale of the fluctuations.
    \item \label{spiked_eigenvectors_I} The eigenspace of an outlying eigenvalue in $\mbf{M}_N$ has nontrivial alignment with the eigenspace of the corresponding eigenvalue in $\mbf{A}_N$. In particular, let $\mbf{m}_N^{(k)}$ be a unit eigenvector associated with the eigenvalue $\lambda_k(\mbf{M}_N)$. If $k \in [L_{-\sigma}]$, then 
    \[
      \lim_{N \to \infty} \snorm{P_{\ker(\theta_k \mbf{I}_N - \mbf{A}_N)}(\mbf{m}_N^{(k)})}_2^2 \aseq 1 - \frac{\sigma^2}{\theta_k^2},
    \]
    where $P_{\ker(\theta_k \mbf{I}_N - \mbf{A}_N)}$ denotes the orthogonal projection onto the eigenspace $\ker(\theta_k \mbf{I}_N - \mbf{A}_N)$; however, if $\theta_{k'} \neq \theta_k$, then
    \[
      \lim_{N \to \infty} \snorm{P_{\ker(\theta_{k'} \mbf{I}_N - \mbf{A}_N)}(\mbf{m}_N^{(k)})}_2 \aseq 0.
    \]
    Similarly, if $k \in [L_{+\sigma}]$, then one replaces all instances of $k$ in the superscripts with $N+1-k$ and all instances of $k$ in the subscripts with $r+1-k$ in the above.
    \item \label{spiked_eigenvectors_II} On the other hand, if $\theta_k$ does not meet the threshold in \ref{spiked_edge} for the creation of an outlier, then the eigenspace of the associated eigenvalue in $\mbf{M}_N$ is asymptotically orthogonal to $\ker(\mbf{A}_N)^\perp$. In particular, if $\theta_k \in [-\sigma, 0)$, then
    \[
      \lim_{N \to \infty} \snorm{P_{\ker(\theta_{k'} \mbf{I}_N - \mbf{A}_N)}(\mbf{m}_N^{(k)})}_2 \aseq 0, \qquad \forall k' \in [r].
    \]
    Similarly, if $\theta_{r + 1 - k} \in (0, \sigma]$, then one replaces $\mbf{m}_N^{(k)}$ with $\mbf{m}_{N}^{(N+1-k)}$ in the above.
\end{enumerate}

The outlier phenomenon in \ref{spiked_edge} was first proven for the GUE in \cite{Pec06}, extended to Wigner matrices satisfying a Poincar\'{e} inequality in \cite{CDMF09}, and then relaxed to a fourth moment Lindeberg-type condition in \cite{PRS13,RS13} at the cost of convergence in probability. In the complex case, we note that the assumption $\real(\mbf{X}_N(i,j)) \indep \imag(\mbf{X}_N(i,j))$ are identically distributed is present throughout. The works \cite{CDMF09,CDMF12,PRS13,RS13} address the fluctuations touched on in \ref{spiked_fluctuations}. Much finer results are known if one assumes uniform subexponential decay of the entries, in which case one can leverage the isotropic local semicircle law \cite{KY13,KY14}, but we will not discuss this further. The eigenvector alignment in \ref{spiked_eigenvectors_I} was first proven for general unitarily/orthogonally invariant random matrices in \cite{BGN11} and extended to Wigner matrices satisfying a Poincar\'{e} inequality in \cite{Cap13}. The nonalignment in \ref{spiked_eigenvectors_II} was proven for the same invariant ensembles in \cite{BGN11} under the assumption of a rank one perturbation $r = 1$.

In this paper, we study the spiked RBM model $\mbf{M}_N = \mbf{\Xi}_N + \mbf{A}_N$. Our main result proves that the eigenvalue/eigenvector BBP transition persists for band widths $b_N \gg N^{\varepsilon}$.

\begin{thm}\label{thm:spiked_rbm}
Let $\mbf{\Xi}_N$ be a RBM as in Definition \ref{defn:rbm}. If $b_N \gg N^\varepsilon$ for some $\varepsilon > 0$, then the spiked RBM model $\mbf{M}_N = \mbf{\Xi}_N + \mbf{A}_N$ exhibits the eigenvalue/eigenvector BBP transition in \ref{spiked_edge}, \ref{spiked_eigenvectors_I}, and \ref{spiked_eigenvectors_II}.
\end{thm}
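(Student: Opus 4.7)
The plan is to follow the resolvent-based strategy that is classical for BBP-type problems. Diagonalize the perturbation as $\mbf{A}_N = \mbf{U}_N \mbf{\Theta}_N \mbf{U}_N^*$ with $\mbf{U}_N \in \C^{N \times r}$ isometric and $\mbf{\Theta}_N = \op{diag}(\theta_1, \ldots, \theta_r)$, and let $G_N(z) = (z \mbf{I}_N - \mbf{\Xi}_N)^{-1}$ be the resolvent of the unperturbed RBM. Schur complementation gives
\begin{equation*}
    \det(z \mbf{I}_N - \mbf{M}_N) \;=\; \det(z \mbf{I}_N - \mbf{\Xi}_N) \cdot \det(\mbf{\Theta}_N) \cdot \det\!\bigl(\mbf{\Theta}_N^{-1} - \mbf{U}_N^* G_N(z) \mbf{U}_N\bigr),
\end{equation*}
so the outliers of $\mbf{M}_N$ lying off $[-2\sigma, 2\sigma]$ are precisely the roots of the $r \times r$ secular determinant, and the Woodbury identity expresses $\mbf{U}_N^* (z\mbf{I}_N - \mbf{M}_N)^{-1} \mbf{U}_N$ in the same variables so that eigenvector overlaps can be read off from residues. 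All three of \ref{spiked_edge}, \ref{spiked_eigenvectors_I}, and \ref{spiked_eigenvectors_II} then reduce to an \emph{isotropic semicircle law} for the RBM: for every deterministic pair of unit vectors $\mbf{u}, \mbf{v} \in \C^N$,
\begin{equation}\label{eq:plan_iso}
    \mbf{u}^* G_N(z) \mbf{v} \;\pto\; m_{sc}(z) \inn{\mbf{u}}{\mbf{v}}, \qquad z \in \C \setminus [-2\sigma, 2\sigma],
\end{equation}
uniformly on compacta, together with the edge bound $\norm{\mbf{\Xi}_N} \leq 2\sigma + o(1)$ (the RBM analogue of \ref{wigner_edge}, obtainable from \ref{cond:moments} by a trace-method argument).

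The heart of the proof, and the focus of the abstract, is the isotropic moment identity underlying \eqref{eq:plan_iso}: for each fixed $k \in \N$ and each pair of unit vectors $\mbf{u}, \mbf{v}$,
\begin{equation*}
    \E\!\bigl[\mbf{u}^* \mbf{\Xi}_N^k \mbf{v}\bigr] \;=\; \inn{\mbf{u}}{\mbf{v}} \cdot m_k(\sigma^2) + o(1),
\end{equation*}
where $m_k(\sigma^2) = \int x^k \, d\mu_{\mcal{SC},\sigma^2}(x)$. Expanding as a walk sum $\sum_{i_0, \ldots, i_k} \overline{\mbf{u}(i_0)} \mbf{v}(i_k) \prod_\ell \mbf{\Xi}_N(i_{\ell-1}, i_\ell)$ over the periodic band graph and invoking Wick's theorem, the dominant contribution comes from closed Dyck walks of even length $k = 2m$: the edge skeleton is a tree, the Dyck constraint forces $i_0 = i_k$ so the boundary weight collapses to $\sum_{i_0} \overline{\mbf{u}(i_0)} \mbf{v}(i_0) = \inn{\mbf{u}}{\mbf{v}}$, and each of the remaining $m$ tree vertices ranges over an $\xi_N$-neighborhood of an already-placed vertex, producing the factor $\xi_N^m$ that exactly cancels the normalization $\xi_N^{-m}$. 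The band-width hypothesis $b_N \gg N^\varepsilon$ enters only when controlling subleading walks: each extra cycle in the edge multigraph and each use of a higher moment of the entries costs a factor $O(\xi_N^{-1}) = O(N^{-\varepsilon})$, so any subleading contribution of bounded order is negligible. Concentration is then obtained by a variance bound on $\mbf{u}^* \mbf{\Xi}_N^k \mbf{v}$ via a parallel walk-sum on two decoupled copies, upgrading convergence in expectation to convergence in probability and, after a union bound, almost sure convergence.

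Granting \eqref{eq:plan_iso}, item \ref{spiked_edge} follows by locating the roots of $\det(\mbf{\Theta}_N^{-1} - m_{sc}(z) \mbf{I}_r) = \prod_s (\theta_s^{-1} - m_{sc}(z))$, which lie precisely at $z = \theta_s + \sigma^2/\theta_s$ whenever $|\theta_s| > \sigma$ since $m_{sc}^{-1}(w) = w^{-1} + \sigma^2 w$. Item \ref{spiked_eigenvectors_I} follows by computing the residue of $\mbf{U}_N^* (z\mbf{I}_N - \mbf{M}_N)^{-1} \mbf{U}_N$ at an outlier $\lambda_s = \theta_s + \sigma^2/\theta_s$: a direct calculation using $m_{sc}(\lambda_s) = 1/\theta_s$ and $m_{sc}'(\lambda_s) = 1/(\sigma^2 - \theta_s^2)$ yields the diagonal residue $1 - \sigma^2/\theta_s^2$ predicted by \ref{spiked_eigenvectors_I}, while the off-diagonal residues vanish so distinct eigenspaces of $\mbf{A}_N$ do not mix. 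The subcritical non-alignment \ref{spiked_eigenvectors_II} is handled by a spectral-projector argument: for intervals $I$ shrinking to the bulk edge, \eqref{eq:plan_iso} forces $\mbf{u}_{k'}^* P_I(\mbf{M}_N) \mbf{u}_{k'}$ to concentrate around the semicircular mass of $I$ minus the outlier contributions, and no single bulk eigenvector can then carry macroscopic overlap with $\mbf{u}_{k'}$.

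The principal obstacle is the \emph{uniformity in $\mbf{u}, \mbf{v}$} required of the isotropic moment estimate. In a Wigner walk sum every vertex is free, so the boundary weight $\inn{\mbf{u}}{\mbf{v}}$ emerges transparently; in the band-matrix walk sum every vertex is constrained to a window of size $\xi_N$, and for $\mbf{u}, \mbf{v}$ localized on scales below $b_N$ this locality could a priori distort the boundary contribution in ways not captured by $\inn{\mbf{u}}{\mbf{v}}$. Quantifying this distortion uniformly over all unit $\mbf{u}, \mbf{v}$ and all walks of bounded length $k$ is the technical core of the moment method for general vector states, and it is precisely the polynomial margin $b_N \gg N^\varepsilon$ that provides enough slack to absorb the combinatorial corrections and close the estimate.
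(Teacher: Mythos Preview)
Your moment-method core is essentially the paper's Lemma~3.1 and Lemma~3.2: the expansion of $\mbf{u}^*\mbf{\Xi}_N^k\mbf{v}$ as a walk sum, the identification of colored double trees as the dominant contributions forcing the endpoints to coincide (hence producing the factor $\inn{\mbf{u}}{\mbf{v}}$), and the variance bound via a two-copy walk sum all match the paper closely. The paper also takes the edge bound $\norm{\mbf{\Xi}_N}\to 2\sigma$ as a black box from the literature rather than deriving it from a trace-method argument; your remark that it is ``obtainable from \ref{cond:moments} by a trace-method argument'' is correct in spirit but understates the work involved (growing-power trace bounds for sparse matrices are delicate).

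Where you diverge is downstream of the isotropic law. The paper never touches the resolvent or the secular determinant: instead it shows that the isotropic moment identity forces the spectral measure $\mu_{\mbf{M}_N}^{\mbf{a}_N^{(s')}}$ of the \emph{perturbed} matrix in the state $\mbf{a}_N^{(s')}$ to coincide with Noiry's explicitly computed measure $\mu_{\theta_{s'}}$ (this is where the paper actually needs the multi-factor version of the isotropic law, to handle the cross terms when expanding powers of $\mbf{\Xi}_N+\mbf{A}_N$), and then reads off both the outlier location and the eigenvector overlap from the atom of $\mu_{\theta_{s'}}$, using Weyl interlacing and induction on $r$ to pin down which eigenvalue carries the atom. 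Your secular-determinant/residue route is the more classical one and has the advantage of needing only the single-factor isotropic law $\mbf{u}^*\mbf{\Xi}_N^k\mbf{v}\to\inn{\mbf{u}}{\mbf{v}}m_k$; the paper's route has the advantage that \ref{spiked_eigenvectors_II} falls out for free from the absence of an atom in $\mu_{\theta_{s'}}$ when $|\theta_{s'}|\leq\sigma$, whereas your spectral-projector sketch for \ref{spiked_eigenvectors_II} is the least developed part of your argument. Both approaches need extra care when the $\theta_s$ are not distinct; the paper handles this by a Hoffman--Wielandt continuity argument and a perturbation trick borrowed from Capitaine, which you do not address.
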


We briefly outline the proof of Theorem \ref{thm:spiked_rbm}. In the case of a rank one perturbation $\theta \mbf{a}_N \mbf{a}_N^*$ of a Wigner matrix $\mbf{W}_N$, Noiry computed the limiting spectral measure $\mu_{\theta}$ of $\mbf{W}_N + \theta \mbf{a}_N \mbf{a}_N^*$ with respect to the vector state $\tau_N(\cdot) = \langle \cdot \mbf{a}_N, \mbf{a}_N\rangle$ \cite[Proposition 2]{Noi21}. In particular,
\begin{equation}\label{eq:spectral_measure_with_vector_v}
    \mu_{\theta}(dx) = \frac{\indc{|x| \leq 2\sigma}}{2\pi}\frac{\sqrt{4\sigma^2 - x^2}}{\theta^2 + \sigma^2 - \theta x} \, dx + \indc{|\theta| > \sigma} \left(1-\frac{\sigma^2}{\theta^2}\right)\delta_{\theta + \frac{\sigma^2}{\theta}}(dx).
\end{equation}
The strong convergence \ref{wigner_edge} of $\mbf{W}_N$ and Weyl's interlacing inequality \cite[Theorem 4.3.1]{HJ13} then imply \ref{spiked_edge} and \ref{spiked_eigenvectors_I} in the spiked Wigner model for $r = 1$ \cite[Corollary 3]{Noi21} (in fact, \ref{spiked_eigenvectors_II} also follows from the same calculation). Noiry's proof of \eqref{eq:spectral_measure_with_vector_v} uses the local law in \cite[Theorem 12.2]{KY17}, but he mentions that simpler arguments suffice in the case of standard basis vectors (for example, the resolvent estimates in \cite[Proposition 6.2]{Cap13}).

In contrast to the usual approach to outliers via the resolvent, our analysis relies on moment method calculations for general vector states. In particular, we prove a seemingly innocuous isotropic \emph{global} law in Proposition \ref{prop:isotropic_global_law}. In its simplest form, it states that if $b_N \gg N^\varepsilon$ for some $\varepsilon > 0$, then
\begin{equation}\label{eq:isotropic_global_law_intro}
    \lim_{N \to \infty} \Tr\left(\prod_{s = 1}^r \mbf{x}_{N}^{(s-1)} {\mbf{y}_{N}^{(s)}}^* \mbf{\Xi}_N^{m_s}\right) \aseq \prod_{s = 1}^r \left[\lim_{N \to \infty} \langle \mbf{x}_N^{(s)}, \mbf{y}_N^{(s)} \rangle \lim_{N \to \infty} \frac{1}{N}\E\left[\Tr(\mbf{\Xi}_N^{m_s})\right]\right],
\end{equation}
where $\mbf{x}_N^{(r)} = \mbf{x}_N^{(0)}$. The proof of Theorem \ref{thm:spiked_rbm} now follows. Indeed, let $\mbf{A}_N = \sum_{s = 1}^r \theta_s \mbf{a}_N ^{(s)} {\mbf{a}_N^{(s)}}^*$ be the spectral decomposition of the perturbation. The formula in \eqref{eq:isotropic_global_law_intro} tells us that the limiting moments of $\mbf{\Xi}_N + \mbf{A}_N$ with respect to the vector state $\tau_{N}^{(s')}(\cdot) = \langle \cdot \mbf{a}_N^{(s')}, \mbf{a}_N^{(s')}\rangle$ coincide with the limiting moments of $\mbf{W}_N + \theta_{s'} \mbf{a}_N^{(s')}{\mbf{a}_N^{(s')}}^*$ with respect to the same vector state for any $s' \in [r]$. Since the measure in \eqref{eq:spectral_measure_with_vector_v} is uniquely determined by its moments (being compactly supported), this shows that the limiting spectral measure of $\mbf{\Xi}_N + \mbf{A}_N$ with respect to the vector state $\tau_{N}^{(s')}$ is again given by $\mu_{\theta_{s'}}$. To complete the proof, we use the strong convergence of $\mbf{\Xi}_N$ \cite[Corollary 2.18]{BvH22} and an inductive application of Weyl's interlacing inequality.

\begin{rem}\label{rem:optimal_band_width_rate}
Naturally, one can ask for the optimal band width rate in Theorem \ref{thm:spiked_rbm}. The strategy above proves the eigenvalue/eigenvector BBP transition for $\mbf{\Xi}_N$ whenever we have the isotropic global law and the strong convergence of $\mbf{\Xi}_N$. If $b_N \gg 1$, then the convergence in \eqref{eq:isotropic_global_law_intro} still holds in probability. The limiting factor is then the strong convergence of $\mbf{\Xi}_N$. Here, there is a ``tradeoff between sparsity and integrability of the entries" \cite[Remark 7.13]{BvH22}. For example, if one assumes the uniform bound $\norm{\mbf{X}_N(i, j)}_p \leq (Cp)^{\alpha}$ for some constants $C, \alpha \in [0, \infty)$ independent of $p$, then $\norm{\mbf{\Xi}_N} \pto 2\sigma$ for band widths $b_N \gg [\log(N)]^{6(1+\alpha)}$ \cite[Theorem 1.4]{BGP14}, where $\pto$ denotes convergence in probability (see also \cite[Corollary 2.18]{BvH22}). In the case of Rademacher entries, the convergence $\norm{\mbf{\Xi}_N} \pto 2\sigma$ is known for band widths $b_N \gg \log(N)$ \cite[Theorem 1.4]{Sod10}; for Gaussian entries, the rate $b_N \gg \log(N)$ is in fact optimal \cite[Corollary 4.4]{BvH16}. Thus, one also has \ref{spiked_edge}, \ref{spiked_eigenvectors_I}, and \ref{spiked_eigenvectors_II} \emph{in probability} for spiked Gaussian RBMs for the optimal band width rate $b_N \gg \log(N)$.
\end{rem}

\begin{rem}\label{rem:noncommutative_probability}
A key input to our analysis is the calculation of the limiting spectral measure in \eqref{eq:spectral_measure_with_vector_v}. Given the moment determinacy of $\mu_\theta$, this calculation should in principle be possible purely on the basis of \eqref{eq:isotropic_global_law_intro}. In fact, the relationship between the matrices $(\mbf{x}_N^{(s-1)} {\mbf{y}_N^{(s)}}^*)_{s=1}^r$ and $\mbf{\Xi}_N$ in \eqref{eq:isotropic_global_law_intro} is a particular instance of infinitesimal freeness, a concept introduced in \cite{BS12}. Shlyakhtenko used the infinitesimal framework to give an interpretation for the eigenvalue BBP transition in unitarily invariant ensembles at the level of the $\frac{1}{N}$ correction \cite{Shl18}. This was further developed by Collins, Hasebe, and Sakuma in \cite{CHS18} using their framework of cyclic monotone independence. The recent breakthrough of C\'{e}bron, Dahlqvist, and Gabriel in \cite{CDG22} unifies these and other notions (e.g., conditional freeness \cite{BLS96} and monotone independence \cite{Mur01}) and provides a rigorous derivation of the BBP transition from noncommutative probabilistic methods. In particular, the calculation of \eqref{eq:spectral_measure_with_vector_v} in the case of a rank one perturbation of the GUE can be realized as the monotone convolution $\mu_\theta = \delta_\theta \rhd \mu_{\mcal{SC}, \sigma^2}$ \cite[Section 1.4]{CDG22}. As we will not use this framework, we do not discuss this further.
\end{rem}

We highlight an interesting feature of Theorem \ref{thm:spiked_rbm}. The original proofs of \ref{spiked_edge}, \ref{spiked_eigenvectors_I}, and \ref{spiked_eigenvectors_II} for unitarily/orthogonally invariant random matrices in \cite{BGN11} crucially uses the fact that the eigenvectors of an invariant ensemble are Haar distributed. The authors remark that the proofs could possibly be adapted to random matrices with Haar-like eigenvectors \cite[Remark 2.15]{BGN11}. On the other hand, Theorem \ref{thm:spiked_rbm} still holds in the established localized regime $b_N \ll N^{1/4}$. For example, consider a rank one perturbation $\mbf{A}_N = \theta \mbf{a}_N \mbf{a}_N^*$ with $\theta < -\sigma$. The alignment in \ref{spiked_eigenvectors_I} amounts to the convergence
\[
    \lim_{N \to \infty} |\langle \mbf{a}_N, \mbf{m}_N^{(1)} \rangle|^2 \aseq 1 - \frac{\sigma^2}{\theta^2}.
\]
For $\frac{\sigma^2}{\theta^2}$ small, this implies that the eigenvector $\mbf{m}_N^{(1)}$ of the spiked model $\mbf{M}_N = \mbf{\Xi}_N + \theta \mbf{a}_N \mbf{a}_N^*$ takes on the shape of the eigenvector $\mbf{a}_N$ of the perturbation: if $\mbf{a}_N$ is localized, then so too is $\mbf{m}_N^{(1)}$; if $\mbf{a}_N$ is delocalized, then so too is $\mbf{m}_N^{(1)}$.

We do not address the fluctuations of the outlying eigenvalues in the spiked RBM model in this article (in particular, the analogue of \ref{spiked_fluctuations}). Here, the \emph{scale} of the fluctuations depends on the shape of the perturbing eigenvectors. This will be the subject of future work. See \cite[Section 2]{Au21} for heuristics and simulations.

\begin{rem}\label{rem:k_regular_wigner}
We have stated our results for RBMs, but the isotropic global law holds more generally for \emph{$k_N$-sparse Wigner matrices}. Here, one replaces the $(0, 1)$-band matrix $\mbf{B}_N$ (resp., the normalization term $\sqrt{\xi_N}$) in the entrywise product \eqref{eq:rbm} with the adjacency matrix $\wtilde{\mbf{B}}_N$ of a $k_N$-regular graph on the vertex set $[N]$ (resp., the normalization term $\sqrt{k_N}$). If $k_N \gg N^\varepsilon$ for some $\varepsilon > 0$, then we again have the strong convergence of this model to the semicircle law \cite[Corollary 2.18]{BvH22}. Thus, Theorem \ref{thm:spiked_rbm} extends to $k_N$-sparse Wigner matrices: the minor modifications necessary for the proof are contained in Remarks \ref{rem:expectation_k_regular_wigner} and  \ref{rem:central_moments_k_regular_wigner}.
\end{rem}

\subsubsection*{Acknowledgments}
The author thanks Guillaume C\'{e}bron for bringing his attention to the works \cite{Noi21,CDG22}.
The author also thanks Jorge Garza-Vargas and Shirshendu Ganguly for many helpful conversations. 

\section{Background}\label{sec:background}

Let $\matn_N(\C)$ denote the set of complex $N \times N$ matrices. For a Hermitian matrix $\mbf{H}_N \in \matn_N(\C)$, we write $\mbf{H}_N = \sum_{k = 1}^N \lambda_k(\mbf{H}_N) \mbf{h}_N^{(k)} {\mbf{h}_N^{(k)}}^*$ for its spectral decomposition.

\begin{defn}[Spectral measure with respect to a state $\psi$]\label{defn:spectral_measure}
Let $\psi: \matn_N(\C) \to \C$ be a state (i.e., a positive linear functional such that $\psi(\mbf{I}_N) = 1$). We define the \emph{spectral measure of $\mbf{H}_N$ with respect to $\psi$} as the unique probability measure $\mu_{\mbf{H}_N}^\psi$ such that
\begin{equation*}
    \int x^m \, \mu_{\mbf{H}_N}^{\psi}(dx) = \psi(\mbf{H}_N^m), \qquad \forall m \in \N.
\end{equation*}
\end{defn}
\begin{eg}[Spectral measure with respect to a vector state]\label{eg:spectral_measure_vector}
The empirical spectral distribution $\mu_{\mbf{H}_N} = \frac{1}{N} \sum_{k=1}^N \delta_{\lambda_k(\mbf{H}_N)}$ is the spectral measure with respect to the normalized trace $\mu_{\mbf{H}_N} = \mu_{\mbf{H}_N}^{\frac{1}{N}\Tr}$.
In the case of a vector state $\psi(\cdot) = \langle \cdot \mbf{u}_N, \mbf{u}_N \rangle$, we use the notation $\mu_{\mbf{H}_N}^{\mbf{u}_N}$. The spectral decomposition implies that 
\[
    \mu_{\mbf{H}_N}^{\mbf{u}_N} = \sum_{k = 1}^N |\langle \mbf{u}_N, \mbf{h}_N^{(k)}\rangle|^2 \delta_{\lambda_k(\mbf{H}_N)}.
\]
\end{eg}

More generally, we will need to compute quantities of the form
\[
    \prod_{s = 1}^r \left\langle p_s(\mcal{H}_N) \mbf{x}_N^{(s)}, \mbf{y}_N^{(s)} \right\rangle = \prod_{s = 1}^r \Tr\left(p_s(\mcal{H}_N) \mbf{x}_N^{(s)}{\mbf{y}_N^{(s)}}^*\right),
\]
where $p_s(\vec{z}) \in \C\langle z_i : i \in I \rangle$ is a noncommutative polynomial evaluated on a family of Hermitian matrices $\mcal{H}_N = (\mbf{H}_N^{(i)})_{i \in I}$. We write the inner product as a trace to suggest the usual graphical approach to such calculations, with a slight modification to distinguish the matrix $\mbf{x}_N^{(s)}{\mbf{y}_N^{(s)}}^*$.

\begin{defn}[Graphs of matrices]\label{defn:graphs_of_matrices}
A \emph{multidigraph} $G = (V, E, \source, \target)$ consists of a nonempty set of vertices $V$, a set of edges $E$, and directions $\source, \target: E \to V$ indicating the source and target of each edge. A \emph{test graph} $T = (G, \gamma)$ is a finite multidigraph $G$ with edge labels $\gamma: E \to I$. For a partition $\pi \in \mcal{P}(V)$, we construct the \emph{quotient test graph} $T^\pi = (G^\pi, \gamma^\pi)$ by merging the vertices of $G$ so that $V^\pi = \pi$. The underlying multidigraph $G^\pi = (V^\pi, E^\pi, \source^\pi, \target^\pi)$ and the associated edge labels $\gamma^\pi: E^\pi \to I$ can then be written as 
\begin{enumerate}[label=(\roman*)]
    \item $V^\pi = V/\sim_\pi = \{[v]_{\pi}: v \in V\}$ and $E^\pi = E$;
    \item $\source^\pi(e) = [\source(e)]_{\pi}$ and $\target^\pi(e) = [\target(e)]_{\pi}$; 
    \item $\gamma^\pi = \gamma$.
\end{enumerate}
For convenience, we simply write $G^\pi = (V^\pi, E)$. By a slight abuse of notation, we often speak of a test graph $T$ and its underlying multidigraph $G$ interchangeably. For example, we also use the notation $\msr{V}(T)$ and $\msr{E}(T)$ for the vertex set and the edge set of a test graph respectively.

We can evaluate a test graph $T$ on a family of matrices $\mcal{M}_N = (\mbf{M}_N^{(i)})_{i \in I}$ using the formula
\begin{equation*}
    \chi(T, \mcal{M}_N) := \sum_{\phi: V \to [N]} \prod_{e \in E} \mbf{M}_N^{(\gamma(e))}(\phi(e)), 
\end{equation*}
where $(\phi(e)) := (\phi(\target(e)), \phi(\source(e))) \in [N]^2$. Similarly, we define
\begin{equation*}
    \chi^0(T, \mcal{M}_N) := \sum_{\phi: V \hookrightarrow [N]} \prod_{e \in E} \mbf{M}_N^{(\gamma(e))}(\phi(e)), 
\end{equation*}
where $\phi: V \hookrightarrow [N]$ denotes an injective map. The functions $\chi$ and $\chi^0$ are related by the M\"{o}bius formula
\[
    \chi(T, \mcal{M}_N) = \sum_{\pi \in \mcal{P}(V)} \chi^0(T^\pi, \mcal{M}_N).
\]
\end{defn}

\begin{eg}[Moments]\label{eg:moments}
For a monomial $p(\vec{z}) = z_{i(1)} \cdots z_{i(d)} \in \C\langle z_i : i \in I \rangle$ of degree $d$,
\begin{equation}\label{eq:trace}
  \Tr(p(\mcal{M}_N)) = \chi(C_p, \mcal{M}_N) = \sum_{\pi \in \mcal{P}(\msr{V}(C_p))} \chi^0(C_p^\pi, \mcal{M}_N),
\end{equation}
where $C_p$ is the test graph
\begin{equation}\label{eq:moment_graph}
  \begin{tikzpicture}[shorten > = 2.5pt, baseline=(current  bounding  box.center)]
    \node at (0, 0) {$C_p = $};
    \draw[fill=black] (3,0) circle (1.75pt);
    \draw[fill=black] (1,0) circle (1.75pt);
    \draw[fill=black] (2.5,.866) circle (1.75pt);
    \draw[fill=black] (2.5,-.866) circle (1.75pt);
    \draw[fill=black] (1.5,.866) circle (1.75pt);
    \draw[fill=black] (1.5,-.866) circle (1.75pt);
    \draw[semithick, ->] (3,0) to node[pos=.625, right] {\footnotesize$i(d-1)$\normalsize} (2.5,-.866);
    \draw[semithick, ->] (2.5,-.866) to node[pos=.425,below] {\footnotesize$\cdots$\normalsize} (1.5,-.866);
    \draw[semithick, ->] (1.5,-.866) to node[pos=.375, left] {\footnotesize$i(3)$\normalsize} (1, 0);
    \draw[semithick, ->] (1, 0) to node[pos=.625, left] {\footnotesize$i(2)$\normalsize} (1.5, .866);
    \draw[semithick, ->] (1.5, .866) to node[midway, above] {\footnotesize$i(1)$\normalsize} (2.5, .866);
    \draw[semithick, ->] (2.5, .866) to node[pos=.375, right] {\footnotesize$i(d)$\normalsize} (3, 0);
  \end{tikzpicture}
\end{equation}
\end{eg}

\begin{eg}[Random band matrices]\label{eg:rbm}
Let $\mcal{W}_N = (\mbf{W}_N^{(i)})_{i \in I}$ be a family of independent Wigner matrices $\mbf{W}_N^{(i)} \deq \Wig(N, \sigma_i^2)$ as in Definition \ref{defn:wigner_matrix}. Mixed moments in the family $\mcal{W}_N$ are governed by free independence in the large dimension limit \cite{Voi91,Dyk93}: if $p(\vec{z})$ is a monomial as in Example \ref{eg:moments}, then
\begin{equation}\label{eq:nc_wick_formula}
    \tau(p) := \lim_{N \to \infty} \E\left[\frac{1}{N}\Tr(p(\mcal{W}_N))\right] = \sum_{\pi \in \mcal{NC}_2(d)} \prod_{\{j, k\} \in \pi} \sigma_{i(j)}\sigma_{i(k)} \indc{i(j) = i(k)},
\end{equation}
where $\mcal{NC}_2(d)$ is the set of noncrossing pair partitions of $[d]$. We showed that the same convergence holds for a family $\mcal{Z}_N = (\mbf{\Xi}_N^{(i)})_{i \in I}$ of independent RBMs $\mbf{\Xi}_N^{(i)} \deq \rbm(N, \sigma_i^2, b_N^{(i)})$ as in Definition \ref{defn:rbm} assuming $b_N^{(i)} \gg 1$ for each $i \in I$ \cite{Au18a}, generalizing the result for a single RBM $\#(I) = 1$ \cite{BMP91}.
\end{eg}

We briefly recall the strategy for proving \eqref{eq:nc_wick_formula} based on the graph formalism in Definition \ref{defn:graphs_of_matrices}. Example \ref{eg:moments} reduces the calculation to understanding the asymptotics of $\E[\chi^0(C_p^\pi, \mcal{Z}_N)]$ for each $\pi \in \mcal{P}(\msr{V}(C_p))$. It turns out that only a certain class of graphs survive in the limit, so-called double trees. Before giving the definition, it will be convenient to introduce some notation that will allow us to extract the relevant information from a quotient test graph $T^\pi$.

\begin{defn}[Graph projections]\label{defn:graph_projections}
Let $G = (V, E, \source, \target)$ be a multidigraph. For $\pi \in \mcal{P}(V)$, we define an equivalence relation on the edges $E$ according to the parallel edges of $G^\pi$: 
\[
    e \sim_\pi e' \iff \{[\target(e)]_\pi, [\source(e)]_\pi\} = \{[\target(e')]_\pi, [\source(e')]_\pi\}.
\]
We write $[e]_\pi = \{e' \in E : e' \sim_\pi e \}$ and $[E]_\pi = \{[e]_\pi : e \in E\}$. We separate the projection of loops $[E]_\pi^{(1)} = \{[e]_\pi \in [E]_\pi : [\target(e)]_\pi = [\source(e)]_\pi\}$ from non-loop edges $[E]_\pi^{(2)} = \{[e]_\pi \in [E]_\pi : [\target(e)]_\pi \neq [\source(e)]_\pi\}$. Note that $\underline{G^\pi} := (V^\pi, [E]_\pi^{(2)})$ is the underlying simple graph of $G^\pi$. If $\pi$ is the partition of singletons, then $G^\pi = G$ and we omit it from the notation (e.g., $[e]$ and $[E]$).
\end{defn}

This allows us to formalize the intuitive notion of a double tree.

\begin{defn}[Double tree]\label{defn:double_tree}
A \emph{double tree} is a multidigraph $G = (V, E, \source, \target)$ such that
\begin{enumerate}[label=(\roman*)]
\item there are no loops: $[E]^{(1)} = \emptyset$;
\item every edge is of multiplicity two: $\#([e]) = 2$ for each $e \in E$;
\item the underlying simple graph $\underline{G} = (V, [E])$ is a tree.
\end{enumerate}
The parallel edges of a double tree come in pairs, allowing us to write $[E] = \{\{e, e'\}: e \in E\}$. We say that a test graph $T = (G, \gamma)$ is a \emph{colored double tree} if $G$ is a double tree such that $\gamma(e) = \gamma(e')$ for every pair of parallel edges $\{e, e'\} \in [E]$. 
\end{defn}
Specializing \cite[Lemma 4.3]{Au18a} to quotients of $C_p$, we obtain
\begin{equation}\label{eq:double_tree}
    \lim_{N \to \infty} \E\left[\frac{1}{N}\chi^0(C_p^\pi, \mcal{Z}_N)\right] =
    \begin{dcases}
    \prod_{\{e, e'\} \in [\msr{E}(C_p)]_\pi} \sigma_{\gamma(e)}^2 &\text{if } C_p^\pi \text{ is a colored double tree;} \\
    0 &\text{else,}
    \end{dcases}
\end{equation}
from which \eqref{eq:nc_wick_formula} now follows.

Of course, one can apply the same formalism to $\E[\Tr(p(\mcal{Z}_N) \mbf{x}_N \mbf{y}_N^*)]$, but it will be convenient to separate the contribution from $\mbf{x}_N \mbf{y}_N^*$. 
\begin{eg}[Weighted moments]\label{eg:weighted_moments}
For a monomial $p(\vec{z}) = z_{i(1)} \cdots z_{i(d)} \in \C\langle z_i : i \in I \rangle$ of degree $d$, we define the test graph
\begin{equation}\label{eq:path_test_graph}
  \begin{tikzpicture}[shorten > = 2.5pt, baseline=(current  bounding  box.center)]
    \node at (0, 0) {$T_p = $};
    \draw[fill=black] (.75,0) circle (1.75pt);
    \draw[fill=black] (1.75,0) circle (1.75pt);
    \draw[fill=black] (2.75,0) circle (1.75pt);
    \draw[fill=black] (3.75,0) circle (1.75pt);
    \node at (.75, -.375) {\footnotesize$v_{0}$\normalsize};
    \node at (1.75, -.375) {\footnotesize$v_{1}$\normalsize};
    \node at (2.775, -.375) {\footnotesize$\cdots$\normalsize};
    \node at (3.75, -.375) {\footnotesize$v_{d}$\normalsize};
    \draw[semithick, ->] (1.75,0) to node[midway, above] {\footnotesize$i(1)$\normalsize} (.75,0);
    \draw[semithick, ->] (2.75,0) to (1.75,0);
    \draw[semithick, ->] (3.75,0) to node[midway, above] {\footnotesize$i(d)$\normalsize} (2.75,0);
    \node at (2.375, .28875) {\footnotesize$\cdots$\normalsize};
  \end{tikzpicture}
\end{equation}
Formally, $T_p = (L_p, \gamma_p)$, where $L_p = (V_p, E_p, \source, \target)$ and $\gamma_p : E_p \to I$ satisfy
\begin{enumerate}[label=(\roman*)]
    \item $V_p = \{v_{t-1}: t \in [d + 1]\}$;
    \item $E_p = \{e_{t}: t \in [d]\}$;
    \item $\source(e_t) = v_t$ and $\target(e_t) = v_{t-1}$;
    \item $\gamma_p(e_t) = i(t)$.
\end{enumerate}
We then have the analogue of \eqref{eq:trace}:
\begin{align*}
    \Tr(p(\mcal{M}_N)\mbf{x}_N\mbf{y}_N^*) &= \sum_{\phi: V_p \to [N]} \mbf{x}_N(\phi(v_d)) \overline{\mbf{y}_N(\phi(v_0))} \prod_{e \in E_p} \mbf{M}_N^{(\gamma_p(e))}(\phi(e)) \\
    &= \sum_{\pi \in \mcal{P}(V_p)} \sum_{\phi: V_p^\pi \hookrightarrow [N]} \mbf{x}_N(\phi([v_d]_{\pi})) \overline{\mbf{y}_N(\phi([v_0]_{\pi}))} \prod_{e \in E_p} \mbf{M}_N^{(\gamma_p(e))}(\phi(e)), 
\end{align*}
where we recall that $E_p^\pi = E_p$.
\end{eg}

\begin{rem}\label{rem:intuition}
Roughly speaking, the isotropic global law amounts to the asymptotic 
\begin{align*}
    \E[\Tr(p(\mcal{Z}_N)\mbf{x}_N\mbf{y}_N^*)] &= \sum_{\pi \in \mcal{P}(V_p)} \sum_{\phi: V_p^\pi \hookrightarrow [N]} \mbf{x}_N(\phi([v_d]_{\pi})) \overline{\mbf{y}_N(\phi([v_0]_{\pi}))} \prod_{e \in E_p} \mbf{\Xi}_N^{(\gamma_p(e))}(\phi(e)) \\
    &= \sum_{\substack{\pi \in \mcal{P}(V_p) \\ \text{s.t. }v_0 \stackrel{\pi}{\sim} v_d}} \sum_{\phi: V_p^\pi \hookrightarrow [N]} \mbf{x}_N(\phi([v_d]_{\pi})) \overline{\mbf{y}_N(\phi([v_0]_{\pi}))} \prod_{e \in E_p} \mbf{\Xi}_N^{(\gamma_p(e))}(\phi(e)) + o(1).
\end{align*}
To see this, note that the cycle graph in \eqref{eq:moment_graph} and the path graph in \eqref{eq:path_test_graph} satisfy $C_p = T_p^{\hat{\pi}}$ for the partition $\hat{\pi}$ whose only nonsingleton block is $\{v_0, v_d\}$. By restricting to the class of partitions $\pi \in \mcal{P}(V_p)$ such that $v_0 \stackrel{\pi}{\sim} v_d$, the set of possible quotients of $T_p$ is then equal to the set of possible quotients of $C_p$. The identification $[v_0]_{\pi} = [v_d]_{\pi}$ also forces $\phi([v_0]_{\pi}) = \phi([v_d]_{\pi})$, which both explains the inner product and introduces the additional normalization that is seemingly missing compared to \eqref{eq:nc_wick_formula}.
\end{rem}

\section{Proofs of the main results}\label{sec:proofs}

\subsection{The isotropic global law}\label{sec:isotropic_global_law}

Let $\mcal{Z}_N = (\mbf{\Xi}_N^{(i)})_{i \in I}$ be a family of independent RBMs $\mbf{\Xi}_N^{(i)} \deq \rbm(N, \sigma_i^2, b_N^{(i)})$ as in Definition \ref{defn:rbm}. We assume that $b_N^{(i)} \gg 1$ for each $i \in I$ to ensure the convergence in \eqref{eq:nc_wick_formula}, which also holds almost surely \cite[Theorem 4.12]{Au18a}:
\[
    \lim_{N \to \infty} \frac{1}{N}\Tr(p(\mcal{Z}_N)) \aseq \sum_{\pi \in \mcal{NC}_2(d)} \prod_{\{j, k\} \in \pi} \sigma_{i(j)}\sigma_{i(k)} \indc{i(j) = i(k)} =: \tau(p).
\]
The main technical contribution of this article is an isotropic version of this convergence. To state the precise result, we need some additional notation. For $p_1(\vec{z}), \ldots, p_r(\vec{z}) \in \C\langle z_i : i \in I \rangle$, we define
\[
    I_{p_1, \ldots, p_r} := \{i \in I : z_i \text{ appears in } p_s(\vec{z}) \text{ for some } s \in [r]\}.
\]
We start by proving convergence in expectation (cf.\@ \cite[Lemma 3.4]{Au21}).

\begin{lemma}[Isotropic global law, in expectation]\label{lem:isotropic_global_law_expectation}
For $p(\vec{z}) \in \C\langle z_i : i \in I \rangle$ and $\mbf{x}_N, \mbf{y}_N \in \sphr^{N-1}$,
\[
    \E\left[\Tr(p(\mcal{Z}_N) \mbf{x}_N {\mbf{y}_N}^*)\right] = \langle \mbf{x}_N, \mbf{y}_N \rangle \tau(p) + O_p\left(\frac{1}{\min_{i \in I_p} \sqrt{\xi_N^{(i)}}}\right).
\]
In particular, the constant in the asymptotic does not depend on the unit vectors $\mbf{x}_N, \mbf{y}_N$.
\end{lemma}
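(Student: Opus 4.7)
The plan is to reduce to the case of a monomial $p(\vec{z}) = z_{i(1)} \cdots z_{i(d)}$ by multilinearity, then expand the expectation using the graph formalism of Example~\ref{eg:weighted_moments}:
\[
\E[\Tr(p(\mcal{Z}_N)\mbf{x}_N\mbf{y}_N^*)] = \sum_{\pi \in \mcal{P}(V_p)} \sum_{\phi: V_p^\pi \hookrightarrow [N]} \mbf{x}_N(\phi([v_d]_\pi))\,\overline{\mbf{y}_N(\phi([v_0]_\pi))}\, \E\!\left[\prod_{e \in E_p} \mbf{\Xi}_N^{(\gamma_p(e))}(\phi(e))\right].
\]
Guided by Remark~\ref{rem:intuition}, I would split the outer sum based on whether $v_0 \sim_\pi v_d$ and treat the two regimes separately.

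For partitions with $v_0 \sim_\pi v_d$, there is a natural bijection with $\pi' \in \mcal{P}(\msr{V}(C_p))$ under which $T_p^\pi$ and $C_p^{\pi'}$ coincide as test graphs. I would adapt the proof of \eqref{eq:double_tree} from \cite[Lemma 4.3]{Au18a}, carrying along the new weight $\mbf{x}_N(k)\overline{\mbf{y}_N(k)}$ at the root $k = \phi([v_0]_{\pi'})$ in place of the unweighted sum $\sum_k 1 = N$. Because the periodic band structure is translation invariant, the number of band-admissible configurations of the remaining vertices does not depend on $k$, so the root sum separates cleanly as $\sum_k \mbf{x}_N(k)\overline{\mbf{y}_N(k)} = \langle \mbf{x}_N, \mbf{y}_N \rangle$. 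Summing over colored double tree quotients recovers the main term $\langle \mbf{x}_N, \mbf{y}_N\rangle \tau(p)$. Non-colored-double-tree quotients contribute terms of strictly smaller order, uniformly in the vectors via the pointwise bound $|\mbf{x}_N(k)\overline{\mbf{y}_N(k)}| \leq \tfrac{1}{2}(|\mbf{x}_N(k)|^2 + |\mbf{y}_N(k)|^2)$ combined with $\norm{\mbf{x}_N}_2^2 = \norm{\mbf{y}_N}_2^2 = 1$.

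For partitions with $v_0 \not\sim_\pi v_d$, the weight couples two distinct vertex classes. Introducing the kernel
\[
K_\pi^{(N)}(\ell, k) := \sum_{\substack{\phi: V_p^\pi \hookrightarrow [N] \\ \phi([v_0]_\pi) = \ell,\, \phi([v_d]_\pi) = k}} \E\!\left[\prod_{e \in E_p} \mbf{\Xi}_N^{(\gamma_p(e))}(\phi(e))\right],
\]
the absolute value of $\pi$'s contribution equals $|\langle \mbf{x}_N, K_\pi^{(N)} \mbf{y}_N\rangle| \leq \snorm{K_\pi^{(N)}}_{\mathrm{op}}$ by Cauchy--Schwarz, since $\mbf{x}_N, \mbf{y}_N$ are unit vectors. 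The Schur test reduces the operator-norm estimate to a combinatorial count of band-admissible vertex configurations with $\ell, k$ fixed; together with the moment bound $|\E[\prod_e \cdots]| = O(\xi_N^{-d/2})$ coming from the moment assumption, this delivers $\snorm{K_\pi^{(N)}}_{\mathrm{op}} = O(1/\sqrt{\xi_N^{(i)}})$.

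The main obstacle will be verifying this $O(1/\sqrt{\xi_N^{(i)}})$ bound uniformly across all $v_0 \not\sim_\pi v_d$ partitions with nonzero expectation. I expect this to follow from a parity/Eulerian-trail argument on the quotient multigraph $T_p^\pi$: the open walk $[v_0]_\pi, \ldots, [v_d]_\pi$ forces both $[v_0]_\pi$ and $[v_d]_\pi$ to have odd degree in $T_p^\pi$, which is incompatible with every edge class having multiplicity exactly $2$ (the structure obtained by closing a colored double tree into a cycle). Consequently, the vertex count of $T_p^\pi$ is strictly smaller than in the $v_0 \sim_\pi v_d$ colored-double-tree case, and the resulting excess in the band-structure normalization budget delivers the claimed decay rate uniformly in $\mbf{x}_N, \mbf{y}_N$.
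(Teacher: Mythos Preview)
Your proposal is correct and follows essentially the same route as the paper: expand via the path test graph $T_p$, restrict to partitions in $\mcal{P}_+(V)$ by centering, identify colored double tree quotients (which force $v_0 \sim_\pi v_d$ by the parity/Eulerian argument you give) as the source of the main term $\langle \mbf{x}_N,\mbf{y}_N\rangle\tau(p)$, and bound the remainder via an operator-norm/Cauchy--Schwarz estimate. Two minor differences are worth noting. First, for the main term you invoke translation invariance of the periodic band to separate the root sum exactly; the paper instead sandwiches the count of admissible $\Psi$ between $\prod_{[e]_\pi}(\xi_N^{(\gamma(e))}-d)$ and $\prod_{[e]_\pi}\xi_N^{(\gamma(e))}$, which is slightly less clean but has the advantage of generalizing to arbitrary $k_N$-regular sparsity patterns (cf.\ Remark~\ref{rem:expectation_k_regular_wigner}). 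Second, for the $v_0\not\sim_\pi v_d$ case the paper does not package the kernel via the Schur test but instead isolates the spanning-tree path between $[v_0]_\pi$ and $[v_d]_\pi$ and bounds the resulting sum directly as $\langle \prod_n \mbf{B}_N^{(\cdot)}|\mbf{x}_N|,|\mbf{y}_N|\rangle \leq \prod_n \xi_N^{(\cdot)}$; this is the same estimate your Schur test would produce (note that the Schur test fixes \emph{one} of $\ell,k$, not both, when bounding row/column sums). Your vertex-count formulation of the deficit (``$\#(\pi)\leq d/2$ when $T_p^\pi$ is not a double tree'') is equivalent to the paper's edge-based accounting via \eqref{eq:normalization_balance}--\eqref{eq:non_double_tree}.
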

\begin{proof}
By linearity, we may assume that $p(\vec{z}) = z_{i(1)} \cdots z_{i(d)}$ is a monomial. To simplify the notation, we abbreviate the test graph $T_p = (L_p, \gamma_p)$ in \eqref{eq:path_test_graph} to $T = (L, \gamma)$. The trace can then be expanded using the graph formalism in Example \ref{eg:weighted_moments}:\small
\begin{align}
    \E\left[\Tr(p(\mcal{Z}_N)\mbf{x}_N\mbf{y}_N^*)\right]
    &= \sum_{\phi: V \to [N]} \mbf{x}_N(\phi(v_d)) \overline{\mbf{y}_N(\phi(v_0))} \E\left[\prod_{e \in E} \mbf{\Xi}_N^{(\gamma(e))}(\phi(e))\right] \notag \\
    &= \sum_{\pi \in \mcal{P}(V)} \sum_{\phi: V^\pi \hookrightarrow [N]} \mbf{x}_N(\phi([v_d]_{\pi})) \overline{\mbf{y}_N(\phi([v_0]_{\pi}))} \E\left[\prod_{e \in E} \mbf{X}_N^{(\gamma(e))}(\phi(e))\right] \frac{\prod_{e \in E} \mbf{B}_N^{(\gamma(e))}(\phi(e))}{\prod_{e \in E} \sqrt{\xi_N^{(\gamma(e))}}} \notag \\ 
    &=: \sum_{\pi \in \mcal{P}(V)} \sum_{\phi: V^\pi \hookrightarrow [N]} \zeta_N(\pi, \phi),
    \label{eq:expanded_trace}
\end{align}\normalsize
where
\begin{equation}\label{eq:expectation_over_edges}
    \E\left[\prod_{e \in E} \mbf{X}_N^{(\gamma(e))}(\phi(e))\right] = O_p(1)
\end{equation}
uniformly in $(\pi, \phi)$ by our moment assumption \eqref{eq:finite_moments} and the finiteness of $\gamma(E) \subset I$. Since $\phi: V^\pi \hookrightarrow [N]$ is injective, the independence of our random variables allows us to factor the expectation over parallel edges. In particular, using the notation in Definition \ref{defn:graph_projections},
\[
    \E\left[\prod_{e \in E} \mbf{X}_N^{(\gamma(e))}(\phi(e))\right] = \prod_{l=1}^2 \prod_{[e]_\pi \in [E]_\pi^{(l)}} \E\left[\prod_{e' \in [e]_\pi} \mbf{X}_N^{(\gamma(e'))}(\phi(e')) \right].
\]
The centeredness of the off-diagonal random variables tells us that 
\begin{equation}\label{eq:zero_term}
    \prod_{[e]_\pi \in [E]_\pi^{(2)}} \E\left[\prod_{e' \in [e]_\pi} \mbf{X}_N^{(\gamma(e'))}(\phi(e')) \right] = 0
\end{equation}
unless $\#([e]_\pi) \geq 2$ for every $[e]_\pi \in [E]_\pi^{(2)}$. So, we may restrict the outer sum in \eqref{eq:expanded_trace} to such partitions. This leads us to define
\begin{equation}\label{eq:restricted_partitions}
    \mcal{P}_+(V) := \{\pi \in \mcal{P}(V) : \#([e]_\pi) \geq 2  \text{ for every } [e]_\pi \in [E]_\pi^{(2)}\},
\end{equation}
which allows us to rewrite \eqref{eq:expanded_trace} as
\[
    \E\left[\Tr(p(\mcal{Z}_N)\mbf{x}_N\mbf{y}_N^*)\right] = \sum_{\pi \in \mcal{P}_+(V)} \sum_{\phi: V^\pi \hookrightarrow [N]} \zeta_N(\pi, \phi).
\]

We introduce some additional notation to control the inner sum. Recall that $\underline{L^\pi} = (V^\pi, [E]_\pi^{(2)})$ is the underlying simple graph of $L^\pi$. Let $(V^\pi, [F]_\pi)$ be a spanning tree of $\underline{L^\pi}$. For $e \in E$, we define
\[
    \lfloor e \rfloor_\pi := \argmin_{e' \in [e]_\pi} b_N^{(\gamma(e'))} = \argmin_{e' \in [e]_\pi} \xi_N^{(\gamma(e'))}.
\]
In the event of a tie, we choose the leftmost edge in the path $L$ for concreteness. Since the matrices $(\mbf{B}_N^{(i)})_{i \in I}$ are symmetric $(0, 1)$-matrices of the form \eqref{eq:band_matrix}, we can bound the contribution from the product
\begin{align*}
    \prod_{e \in E} \mbf{B}_N^{(\gamma(e))}(\phi(e)) &= \prod_{l=1}^2 \prod_{[e]_\pi \in [E]_\pi^{(l)}} \prod_{e' \in [e]_\pi} \mbf{B}_N^{(\gamma(e'))}(\phi(e')) \\
    &= \prod_{[e]_\pi \in [E]_\pi^{(2)}} \prod_{e' \in [e]_\pi} \mbf{B}_N^{(\gamma(e'))}(\phi(e')) \\
    &= \prod_{[e]_\pi \in [E]_\pi^{(2)}} \mbf{B}_N^{(\gamma(\lfloor e \rfloor_\pi))}(\phi(\lfloor e \rfloor_\pi)) \\
    &\leq \prod_{[e]_\pi \in [F]_\pi} \mbf{B}_N^{(\gamma(\lfloor e \rfloor_\pi))}(\phi(\lfloor e \rfloor_\pi)),
\end{align*}
where the symmetry eliminates the ambiguity in the direction of the edge $\lfloor e \rfloor_\pi$ for the purposes of $\mbf{B}_N^{(\gamma(\lfloor e \rfloor_\pi))}(\phi(\lfloor e \rfloor_\pi))$. Combining this with the bound on the expectation \eqref{eq:expectation_over_edges}, we obtain
\begin{align*}
    \zeta_N(\pi, \phi) &= O_p\left(|\mbf{x}_N(\phi([v_d]_{\pi}))| |\mbf{y}_N(\phi([v_0]_{\pi}))| \frac{\prod_{[e]_\pi \in [F]_\pi} \mbf{B}_N^{(\gamma(\lfloor e \rfloor_\pi))}(\phi(\lfloor e \rfloor_\pi))}{\prod_{e \in E} \sqrt{\xi_N^{(\gamma(e))}}}\right).
\end{align*}
This allows us to further restrict to partitions
\[
    \mcal{P}_{++}(V) := \{\pi \in \mcal{P}(V) : T^\pi \text{ is a colored double tree}\} \subset \mcal{P}_+(V)
\]
at the cost of
\begin{equation}\label{eq:restricted_sum}
    \sum_{\pi \in \mcal{P}_+(V)} \sum_{\phi: V^\pi \hookrightarrow [N]} \zeta_N(\pi, \phi) = \sum_{\pi \in \mcal{P}_{++}(V)} \sum_{\phi: V^\pi \hookrightarrow [N]} \zeta_N(\pi, \phi) + O\left(\frac{1}{\min_{e \in E} \sqrt{\xi_N^{(\gamma(e))}}}\right).
\end{equation}
Before proving this, note that if $T^\pi$ is a double tree, then $[v_0]_\pi = [v_d]_\pi$. Indeed, every vertex in a double tree has even degree; however, every vertex $v \not\in \{v_0, v_d\}$ in $T$ has degree two, while $\deg_{T}(v_0) = \deg_{T}(v_d) = 1$. Since $\deg_{T^\pi}([v]_\pi) = \sum_{v' \sim_\pi v} \deg_{T}(v')$, the result follows. 

To prove \eqref{eq:restricted_sum}, assume that $\pi \in \mcal{P}_+(V)\setminus\mcal{P}_{++}(V)$. We consider two cases: $[v_0]_\pi \neq [v_d]_\pi$ and $[v_0]_\pi = [v_d]_\pi$. If $[v_0]_\pi \neq [v_d]_\pi$, then there is a unique (necessarily nonempty) path from $[v_0]_\pi$ to $[v_d]_\pi$ in $(V^\pi, [F]_\pi)$. We enumerate the edges $[f_1]_\pi, \ldots, [f_m]_\pi$ on this path as well as the vertices $w_0, \ldots, w_m$. We separate the remaining edges $[F']_\pi = [F]_\pi \setminus \{[f_1]_\pi, \ldots, [f_m]_\pi\}$ to bound
\begin{align*}
    &\sum_{\phi: V^\pi \hookrightarrow [N]}  |\mbf{x}_N(\phi([v_d]_{\pi}))| |\mbf{y}_N(\phi([v_0]_{\pi}))| \frac{\prod_{[e]_\pi \in [F]_\pi} \mbf{B}_N^{(\gamma(\lfloor e \rfloor_\pi))}(\phi(\lfloor e \rfloor_\pi))}{\prod_{e \in E} \sqrt{\xi_N^{(\gamma(e))}}} \\
    \leq &\sum_{\Phi: \{w_n\}_{n = 0}^m \to [N]} |\mbf{x}_N(\Phi(w_m))| |\mbf{y}_N(\Phi(w_0))| \prod_{n = 1}^m \mbf{B}_N^{(\gamma(\lfloor f_n \rfloor_\pi))}(\Phi(w_{n -1}), \Phi(w_n)) \\
    \phantom{=} &\phantom{\sum_{\phi: \{w_n\}_{n = 0}^m \to [N]} |\mbf{x}_N(\phi(w_m))|} \cdot \frac{\sum_{\Psi: V^\pi \setminus \{w_n\}_{n = 0}^m \to [N]} \prod_{[e]_\pi \in [F']_\pi} \mbf{B}_N^{(\gamma(\lfloor e \rfloor_\pi))}(\Psi(\lfloor e \rfloor_\pi))}{\prod_{e \in E} \sqrt{\xi_N^{(\gamma(e))}}} \\
    = &\frac{\prod_{[e]_\pi \in [F']_\pi} \xi_N^{(\gamma(\lfloor e \rfloor_\pi))}}{\prod_{e \in E} \sqrt{\xi_N^{(\gamma(e))}}} \sum_{\Phi: \{w_n\}_{n=0}^m \to [N]} |\mbf{x}_N(\Phi(w_m))| |\mbf{y}_N(\Phi(w_0))| \prod_{n = 1}^m \mbf{B}_N^{(\gamma(\lfloor f_n \rfloor_\pi))}(\Phi(w_{n -1}), \Phi(w_n)),
\end{align*}
where we have again used the fact that the matrices $(\mbf{B}_N^{(i)})_{i \in I}$ are symmetric to ignore the directions of the edges in writing $\mbf{B}_N^{(\gamma(\lfloor f_n \rfloor_\pi))}(\Phi(\lfloor f_n \rfloor_\pi)) = \mbf{B}_N^{(\gamma(\lfloor f_n \rfloor_\pi))}(\Phi(w_{n -1}), \Phi(w_n))$. We recognize the remaining sum as an inner product \small
\[
    \sum_{\Phi: \{w_0\}_{n=1}^m \to [N]} |\mbf{x}_N(\Phi(w_m))| |\mbf{y}_N(\Phi(w_0))| \prod_{n = 1}^m \mbf{B}_N^{(\gamma(\lfloor f_n \rfloor_\pi))}(\Phi(w_{n -1}), \Phi(w_n))
    = \left\langle \prod_{n = 1}^m \mbf{B}_N^{(\gamma(\lfloor f_n \rfloor_\pi))} |\mbf{x}_N|, |\mbf{y}_N|\right\rangle,
\]\normalsize
where $|\mbf{x}_N|, |\mbf{y}_N|$ are the vectors obtained from $\mbf{x}_N, \mbf{y}_N$ by applying the entrywise absolute value. In particular, it is still the case that $|\mbf{x}_N|, |\mbf{y}_N| \in \sphr^{N-1}$. Thus,
\begin{align*}
    \left\langle \prod_{n = 1}^m \mbf{B}_N^{(\gamma(\lfloor f_n \rfloor_\pi))} |\mbf{x}_N|, |\mbf{y}_N|\right\rangle
    &\leq \snorm{\prod_{n = 1}^m \mbf{B}_N^{(\gamma(\lfloor f_n \rfloor_\pi))}}_2 \\
    &\leq \prod_{n = 1}^m \snorm{\mbf{B}_N^{(\gamma(\lfloor f_n \rfloor_\pi))}}_2 = \prod_{n = 1}^m \xi_N^{(\gamma(\lfloor f_n \rfloor_\pi))},
\end{align*}
where the operator norm calculation follows from the observation that $\mbf{B}_N^{(i)}$ is a real symmetric $(0,1)$-matrix with every row sum equal to $\xi_N^{(i)}$ (see, for example, \cite[Problem 5.6.P21]{HJ13}).

Putting everything together, we have that
\begin{align*}
    \sum_{\phi: V^\pi \hookrightarrow [N]} \zeta_N(\pi, \phi) &= O_p\left(\frac{\prod_{[e]_\pi \in [F']_\pi} \xi_N^{(\gamma(\lfloor e \rfloor_\pi))} \prod_{n = 1}^m \xi_N^{(\gamma(\lfloor f_m \rfloor_\pi))}}{\prod_{e \in E} \sqrt{\xi_N^{(\gamma(e))}}}\right) \\
    &= O_p\left(\frac{\prod_{[e]_\pi \in [F]_\pi} \xi_N^{(\gamma(\lfloor e \rfloor_\pi))}}{\prod_{[e]_\pi \in [E]_\pi} \prod_{e' \in [e]_\pi} \sqrt{\xi_N^{(\gamma(e'))}}}\right).
\end{align*}
Since $\pi \in \mcal{P}_+(V) \setminus \mcal{P}_{++}(V)$, we know that
\begin{equation}\label{eq:normalization_balance}
    \frac{\xi_N^{(\gamma(\lfloor e \rfloor_\pi))}}{\prod_{e' \in [e]_\pi} \sqrt{\xi_N^{(\gamma(e'))}}} \leq 1, \qquad \forall [e]_\pi \in [E]_\pi^{(2)}.
\end{equation}
Furthermore, by definition, $T^\pi$ is not a colored double tree. If $T^\pi$ is a (miscolored) double tree, then we are done since the independence and centeredness of the off-diagonal entries would again imply \eqref{eq:zero_term}. So, we may assume that $T^\pi$ is not a double tree. This means that either the underlying simple graph is not a tree, in which case $[F]_\pi \subsetneq [E]_\pi$, or the underlying simple graph is a tree, but there is at least one edge $[e]_\pi$ with multiplicity $\#([e]_\pi) \geq 3$. In either case, we see that
\begin{equation}\label{eq:non_double_tree}
    \frac{\prod_{[e]_\pi \in [F]_\pi} \xi_N^{(\gamma(\lfloor e \rfloor_\pi))}}{\prod_{[e]_\pi \in [E]_\pi} \prod_{e' \in [e]_\pi} \sqrt{\xi_N^{(\gamma(e'))}}} = O_p\left(\frac{1}{\min_{e \in E} \sqrt{\xi_N^{(\gamma(e))}}}\right).
\end{equation}

The remaining case of $\pi \in \mcal{P}_+(V)\setminus\mcal{P}_{++}(V)$ such that $[v_0]_\pi = [v_d]_\pi$ is treated much the same. Indeed, the Cauchy-Schwarz inequality tells us that
\begin{align*}
    &\sum_{\phi: V^\pi \hookrightarrow [N]} |\mbf{x}_N(\phi([v_d]_{\pi}))| |\mbf{y}_N(\phi([v_0]_{\pi}))| \frac{\prod_{[e]_\pi \in [F]_\pi} \mbf{B}_N^{(\gamma(\lfloor e \rfloor_\pi))}(\phi(\lfloor e \rfloor_\pi))}{\prod_{e \in E} \sqrt{\xi_N^{(\gamma(e))}}} \\
    \leq &\frac{\prod_{[e]_\pi \in [F]} \xi_N^{(\gamma(\lfloor e \rfloor_\pi))}}{\prod_{[e]_\pi \in [E]_\pi} \prod_{e' \in [e]_\pi} \sqrt{\xi_N^{(\gamma(e'))}}} \sum_{i = 1}^N |\mbf{x}_N(i)| |\mbf{y}_N(i)| \\
    \leq  &\frac{\prod_{[e]_\pi \in [F]} \xi_N^{(\gamma(\lfloor e \rfloor_\pi))}}{\prod_{[e]_\pi \in [E]_\pi} \prod_{e' \in [e]_\pi} \sqrt{\xi_N^{(\gamma(e'))}}}.
\end{align*}
As before, we can assume that $T^\pi$ is not a double tree, which again leads to the asymptotic \eqref{eq:non_double_tree}. We conclude that
\[
    \sum_{\pi \in \mcal{P}_+(V)\setminus\mcal{P}_{++}(V)} \sum_{\phi: V^\pi \hookrightarrow [N]} \zeta_N(\pi, \phi) = O_p\left(\frac{1}{\min_{e \in E} \sqrt{\xi_N^{(\gamma(e))}}}\right),
\]
which proves \eqref{eq:restricted_sum}.

To finish the proof, consider a partition $\pi \in \mcal{P}_{++}(V)$. By definition, $T^\pi$ is a colored double tree such that $[v_0]_\pi = [v_d]_\pi$. We can think of performing the identification $v_0 \stackrel{\pi}{\sim} v_d$ first and view $T^\pi$ as a quotient of the directed cycle $C_p = T^{\hat{\pi}}$ in \eqref{eq:moment_graph}, where the only nonsingleton block in $\hat{\pi}$ is $\{v_0, v_d\}$. It is not hard to see that a quotient of a directed cycle is a double tree only if each of its parallel edges $\{e, e'\}$ point in opposite directions $(\target(e), \source(e)) = (\source(e'), \target(e'))$ \cite[Figure 5]{Au18a}. Thus, the expectation in $\zeta_N(\pi, \phi)$ can be computed entirely in terms of the variances:
\small
\begin{align*}
    \sum_{\phi: V^\pi \hookrightarrow [N]} \zeta_N(\pi, \phi) &= \sum_{\phi: V^\pi \hookrightarrow [N]} \mbf{x}_N(\phi([v_d]_{\pi})) \overline{\mbf{y}_N(\phi([v_0]_{\pi}))} \E\left[\prod_{e \in E} \mbf{X}_N^{(\gamma(e))}(\phi(e))\right] \frac{\prod_{e \in E} \mbf{B}_N^{(\gamma(e))}(\phi(e))}{\prod_{e \in E} \sqrt{\xi_N^{(\gamma(e))}}} \\
    &= \left[\prod_{[e] \in [E]_\pi} \sigma_{\gamma(e)}^2\right] \sum_{\Phi: \{[v_0]_\pi\} \to [N]} \mbf{x}_N(\Phi([v_0]_\pi)) \overline{\mbf{y}_N(\Phi([v_0]_\pi))} \\
    &\phantom{=\left[\prod_{[e] \in [E]_\pi} \sigma_{\gamma(e)}^2\right] \sum_{\Phi: \{[v_0]_\pi\} \to [N]}} \cdot\left(\frac{\sum_{\Psi: V^\pi \setminus \{[v_0]_\pi\} \hookrightarrow [N]\setminus\{\Phi([v_0]_\pi)\}} \prod_{[e]_\pi \in [E]_\pi} \mbf{B}_N^{(\gamma(e))}(\Psi(e))}{\prod_{[e] \in [E]_\pi} \xi_N^{(\gamma(e))}}\right).
\end{align*}\normalsize
As before, we can bound the number of maps $\Psi$ that will produce a nonzero summand (necessarily equal to 1) by
\[
    \sum_{\Psi: V^\pi \setminus \{[v_0]_\pi\} \hookrightarrow [N]\setminus\{\Phi([v_0]_\pi)\}} \prod_{[e]_\pi \in [E]_\pi} \mbf{B}_N^{(\gamma(e))}(\Psi(e)) \leq \prod_{[e]_\pi \in [E]_\pi} \xi_N^{(\gamma(e))}.
\]
Since $(V^\pi, [E]_\pi)$ is a tree, the only obstruction to equality is the required injectivity of $\Psi$. This gives the straightforward lower bound
\[
    \sum_{\Psi: V^\pi \setminus \{[v_0]_\pi\} \hookrightarrow [N]\setminus\{\Phi([v_0]_\pi)\}} \prod_{[e]_\pi \in [E]_\pi} \mbf{B}_N^{(\gamma(e))}(\Psi(e)) \geq \prod_{[e]_\pi \in [E]_\pi} (\xi_N^{(\gamma(e))} - d),
\]
where we recall that $d \geq \#(V^\pi) - 1$ is the degree of the monomial $p(\vec{z})$. We conclude that
\begin{align*}
    &\left[\prod_{[e] \in [E]_\pi} \sigma_{\gamma(e)}^2\right] \sum_{\Phi: \{[v_0]_\pi\} \to [N]} \mbf{x}_N(\Phi([v_0]_\pi)) \overline{\mbf{y}_N(\Phi([v_0]_\pi))} \\
    &\phantom{=\left[\prod_{[e] \in [E]_\pi} \sigma_{\gamma(e)}^2\right] \sum_{\Phi: \{[v_0]_\pi\} \to [N]}} \cdot\left(\frac{\sum_{\Psi: V^\pi \setminus \{[v_0]_\pi\} \hookrightarrow [N]\setminus\{\Phi([v_0]_\pi)\}} \prod_{[e]_\pi \in [E]_\pi} \mbf{B}_N^{(\gamma(e))}(\Psi(e))}{\prod_{[e] \in [E]_\pi} \xi_N^{(\gamma(e))}}\right) \\
    = &\langle \mbf{x}_N, \mbf{y}_N \rangle \prod_{[e] \in [E]_\pi} \sigma_{\gamma(e)}^2 + O_p\left(\frac{1}{\min_{e \in E} \xi_N^{(\gamma(e))}}\right).
\end{align*}
In view of the usual calculation for the normalized trace \eqref{eq:double_tree}, we are done.
\end{proof}

\begin{rem}\label{rem:expectation_k_regular_wigner}
In the case of independent $(k_N^{(i)})_{i \in I}$-regular Wigner matrices
\[
    (\mbf{\Xi}_N^{(i)})_{i \in I} = \left(\frac{1}{\sqrt{k_N^{(i)}}}\wtilde{\mbf{B}}_N^{(i)} \circ \mbf{X}_N^{(i)}\right)_{i \in I},
\]
we simply need to replace all instances of $\mbf{B}_N^{(i)}$ (resp., $\xi_N^{(i)}$) with $\wtilde{\mbf{B}}_N^{(i)}$ (resp., $k_N^{(i)}$) in the proof with one notable exception. In particular, for periodic $(0,1)$-band matrices $(\mbf{B}_N^{(i)})_{i \in I}$, we repeatedly used the identity 
\[
    \circ_{e' \in [e]_\pi} \mbf{B}_N^{(\gamma(e'))} = \mbf{B}_N^{(\gamma(\lfloor e \rfloor_\pi))},
\]
where we recall that $\circ$ denotes the entrywise product. While this no longer holds in general for $(\wtilde{\mbf{B}}_N^{(i)})_{i \in I}$, it is true that $\circ_{e' \in [e]_\pi} \wtilde{\mbf{B}}_N^{(\gamma(e'))}$ is a symmetric $(0, 1)$-matrix with row sums bounded by $k_N^{(\gamma(\lfloor e \rfloor_\pi))} := \min_{e' \in [e]_\pi} k_N^{(\gamma(e'))}$, which is all that is needed to carry forward the same argument.
\end{rem}

Having computed the expectation, we proceed to proving concentration. For this, we use a bound on central moments.

\begin{lemma}[Central moments]\label{lem:central_moments}
For $(p_s(\vec{z}))_{s = 1}^r \subset \C\langle z_i : i \in I \rangle$ and $(\mbf{x}_N^{(s)})_{s = 1}^r, (\mbf{y}_N^{(s)})_{s = 1}^r \subset \sphr^{N-1}$,
\[
    \E\left[\prod_{s = 1}^r \left(\Tr\left(p_s(\mcal{Z}_N)\mbf{x}_N^{(s)} {\mbf{y}_N^{(s)}}^*\right) - \E\left[\Tr\left(p_s(\mcal{Z}_N)\mbf{x}_N^{(s)}{\mbf{y}_N^{(s)}}^*\right) \right] \right) \right] = O_{p_1, \ldots, p_r}\left(\left[\min_{i \in I_{p_1, \ldots, p_r}} \sqrt{\xi_N^{(i)}}\right]^{-r}\right).
\]
As before, the constant in the asymptotic does not depend on the unit vectors $(\mbf{x}_N^{(s)})_{s = 1}^r, (\mbf{y}_N^{(s)})_{s = 1}^r$.
\end{lemma}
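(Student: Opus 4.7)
The plan is to extend the graph-theoretic analysis from Lemma \ref{lem:isotropic_global_law_expectation} to the disjoint union $T = \bigsqcup_{s=1}^r T_{p_s}$ of path test graphs, exploiting the fact that centering each factor forces only ``connected'' diagrams to contribute. First I would apply the moment-cumulant identity
\[
  \E\!\left[\prod_{s=1}^r(X_s - \E X_s)\right] = \sum_{\substack{\sigma \in \mcal{P}([r]) \\ |B|\geq 2,\;\forall B\in\sigma}} \prod_{B \in \sigma} \kappa_B\!\left((X_s)_{s \in B}\right),
\]
where $X_s = \Tr(p_s(\mcal{Z}_N)\mbf{x}_N^{(s)}{\mbf{y}_N^{(s)}}^*)$ and $\kappa_B$ denotes the joint cumulant. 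Since $\sum_{B\in\sigma}(|B|-1) = r - \#\sigma \geq r/2$ for any such $\sigma$, the lemma reduces to proving that each joint cumulant with $|B| = b \geq 2$ satisfies $|\kappa_B((X_s)_{s \in B})| = O([\min_i \xi_N^{(i)}]^{-(b-1)})$, uniformly in the unit vectors.

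To prove this cumulant bound, fix $B$ with $|B| = b \geq 2$ and set $V_B = \bigsqcup_{s \in B} V_{p_s}$. By M\"{o}bius inversion on the partition lattice together with the independence of the matrix entries, the cumulant admits a graph-theoretic expansion
\[
  \kappa_B((X_s)_{s \in B}) = \sum_{\pi \in \mcal{P}_{\mathrm{conn}}(V_B)} \sum_{\phi: V_B^\pi \hookrightarrow [N]} \zeta_N^B(\pi, \phi),
\]
where $\mcal{P}_{\mathrm{conn}}(V_B)$ consists of partitions $\pi$ for which the graph on $B$ with $s \sim s'$ iff some block of $\pi$ meets both $V_{p_s}$ and $V_{p_{s'}}$ (transitively closed) is connected. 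Centering also restricts to $\pi \in \mcal{P}_+(V_B)$ from \eqref{eq:restricted_partitions}; as in Lemma \ref{lem:isotropic_global_law_expectation}, contributions from $\pi \in \mcal{P}_+(V_B) \setminus \mcal{P}_{++}(V_B)$ (i.e., $T_B^\pi$ not a colored double tree) are absorbed into the error via a verbatim repetition of \eqref{eq:restricted_sum}--\eqref{eq:non_double_tree}.

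The main task is then to estimate each contribution with $\pi \in \mcal{P}_{++}(V_B) \cap \mcal{P}_{\mathrm{conn}}(V_B)$. The degree-parity argument forces $v_0^{(s)} \sim_\pi v_d^{(s)}$ for each $s \in B$, and the connectedness of the induced graph on $B$ forces a nontrivial crossing structure in $T_B^\pi$. I would then use iterated Cauchy-Schwarz on band-restricted bilinear forms to control the residual vector-weighted sum; the prototypical estimate (for $b = 2$, $p_1 = p_2 = z$) reads
\[
  \left|\sum_{|i-j|_N \leq b_N} \mbf{M}_1(i, j) \mbf{M}_2(i, j)\right| \leq \snorm{\mbf{B}_N \circ \mbf{M}_1}_{\mathrm{HS}} \snorm{\mbf{B}_N \circ \mbf{M}_2}_{\mathrm{HS}} \leq 1,
\]
with $\mbf{M}_s = \mbf{x}_N^{(s)}{\mbf{y}_N^{(s)}}^*$ and $\snorm{\mbf{B}_N \circ \mbf{M}_s}_{\mathrm{HS}}^2 = \sum_{|i-j|_N \leq b_N} |\mbf{x}_N^{(s)}(j)|^2 |\mbf{y}_N^{(s)}(i)|^2 \leq 1$. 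Combined with the edge normalization $\prod_{s \in B} \xi_N^{-d_s/2}$ and the band degree-of-freedom count $N \xi_N^{V' - 1}$ (where $V' = \tfrac{1}{2}\sum d_s + 1$ is the vertex count of the quotient tree), this yields the desired $O(\xi_N^{-(b-1)})$.

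The main obstacle will be executing the multilinear analogue of this Cauchy-Schwarz estimate for arbitrary double tree topologies on $T_B^\pi$: the naive operator-norm bound $\snorm{\mbf{B}_N}_{\mathrm{op}} = \xi_N$ is too loose and would fail to extract enough savings, so one must apply Cauchy-Schwarz at the level of the Hilbert-Schmidt norms of $\mbf{B}_N \circ \mbf{M}_s$. For example, the star topology arising from the $p_s(z) = z^2$ case is controlled by bounding $|\sum_c \prod_{s \in B}(\mbf{B}_N \mbf{w}_s)(c)|$ via iterated $\ell^2$-norm estimates on $\mbf{B}_N \mbf{w}_s$, where $\mbf{w}_s = \mbf{x}_N^{(s)} \circ \overline{\mbf{y}_N^{(s)}}$. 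Systematizing this analysis for all colored double tree topologies, while matching the graph-theoretic counting against the analytic estimates, is the technical heart of the argument.
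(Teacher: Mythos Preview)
Your proposal has a real gap in the cumulant bound. You claim $|\kappa_B((X_s)_{s\in B})| = O(\xi_N^{-(|B|-1)})$ and combine this with $\sum_B(|B|-1) \geq r/2$ to conclude. But that cumulant bound is false for $|B| \geq 3$: already for $b=3$ and $p_1=p_2=p_3=z$, the partition that stacks all three edges in parallel contributes a third-moment term of exact order $\xi_N^{-3/2}$ to $\kappa_3$, and the lower-order corrections $\E[X_i]\E[X_jX_k]$ (coming from a loop plus a doubled edge) do not cancel it. So $\kappa_3$ is genuinely of order $\xi_N^{-3/2}$, not $\xi_N^{-2}$. The correct bound is $\kappa_B = O(\xi_N^{-|B|/2})$, which still yields the lemma since $\sum_B |B|/2 = r/2$---but then the moment--cumulant decomposition is doing no work, and you are back to proving directly that each ``connected'' piece of size $b$ contributes $O(\xi_N^{-b/2})$.

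That direct estimate is where the content lies, and your outline does not supply it. The claim that $\pi \in \mcal{P}_+ \setminus \mcal{P}_{++}$ is handled by a verbatim repetition of \eqref{eq:restricted_sum}--\eqref{eq:non_double_tree} is wrong: those estimates yield only $O(\xi_N^{-1/2})$ per partition, not $O(\xi_N^{-b/2})$. The degree-parity claim is also wrong: for $b \geq 2$ paths, a double-tree quotient only forces the $2b$ endpoints to merge in pairs, not that $v_0^{(s)} \sim_\pi v_{d_s}^{(s)}$ for each $s$ (e.g., two length-one paths with $v_0^{(1)}\sim_\pi v_0^{(2)}$ and $v_1^{(1)}\sim_\pi v_1^{(2)}$). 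The paper avoids both the double-tree/non-double-tree dichotomy and the cumulant detour: it expands the centered product directly, uses \emph{edge-overlay} connectivity (strictly stronger than your vertex-based $\mcal{P}_{\mathrm{conn}}$, and what centering actually forces) to factor over classes of size $\geq 2$, and reduces everything to the single combinatorial inequality \eqref{eq:combinatorial_concentration_bound}, which bounds the number of unconstrained inner and singly-marked blocks of $\pi$ in terms of the total edge count. That inequality is proved by induction on $D=\sum_s d_s$ via a leaf-pinching argument and handles all $\pi \in \mcal{P}_+$ uniformly; it is the ingredient your sketch is missing.
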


\begin{proof}
By multilinearity, we may assume that each $p_s(\vec{z}) = z_{i_s(1)} \cdots z_{i_s(d_s)}$ is a monomial. To simplify the notation, we abbreviate the test graph $T_{p_s} = (L_{p_s}, \gamma_{p_s})$ in \eqref{eq:path_test_graph} to $T_s = (L_s, \gamma_s)$. We also define $T = (G, \gamma)$ to be the disjoint union $T = \sqcup_{s = 1}^r T_s$ of the test graphs $(T_s)_{s = 1}^r$. The analogue of \eqref{eq:expanded_trace} for central moments can then be written as \small
\begin{align*}
    &\E\left[\prod_{s = 1}^r \left(\Tr\left(p_s(\mcal{Z}_N)\mbf{x}_N^{(s)} {\mbf{y}_N^{(s)}}^*\right) - \E\left[\Tr\left(p_s(\mcal{Z}_N)\mbf{x}_N^{(s)}{\mbf{y}_N^{(s)}}^*\right) \right] \right) \right] \\
    = &\E\left[\prod_{s = 1}^r \left( \sum_{\phi_s: V_s \to [N]} \mbf{x}_N^{(s)}(\phi_s(v_{d_s}^{(s)})) \overline{\mbf{y}_N^{(s)}(\phi_s(v_0^{(s)}))} \left(\prod_{e \in E_s} \mbf{\Xi}_N^{(\gamma_s(e))}(\phi_s(e)) - \E\left[\prod_{e \in E_s} \mbf{\Xi}_N^{(\gamma_s(e))}(\phi_s(e))\right] \right) \right) \right] \\
    = &\sum_{\phi: V \to [N]} \left(\prod_{s = 1}^r \mbf{x}_N^{(s)}(\phi(v_{d_s}^{(s)})) \overline{\mbf{y}_N^{(s)}(\phi(v_0^{(s)}))}\right) \E\left[\prod_{s = 1}^r \left(\prod_{e \in E_s} \mbf{\Xi}_N^{(\gamma(e))}(\phi(e)) - \E\left[\prod_{e \in E_s} \mbf{\Xi}_N^{(\gamma(e))}(\phi(e))\right] \right) \right] \\
    = &\sum_{\pi \in \mcal{P}(V)} \sum_{\phi: V^\pi \hookrightarrow [N]} \left(\prod_{s = 1}^r \mbf{x}_N^{(s)}(\phi([v_{d_s}^{(s)}]_\pi)) \overline{\mbf{y}_N^{(s)}(\phi([v_0^{(s)}]_\pi))}\right) \\
    \phantom{=} &\phantom{\sum_{\pi \in \mcal{P}(V) \prod_{s = 1}^r} \sum_{\phi: V^\pi \hookrightarrow [N]}} \cdot \E\left[\prod_{s = 1}^r  \left(\prod_{e \in E_s} \mbf{X}_N^{(\gamma(e))}(\phi(e)) - \E\left[\prod_{e \in E_s} \mbf{X}_N^{(\gamma(e))}(\phi(e))\right]\right) \right] \frac{\prod_{e \in E} \mbf{B}_N^{(\gamma(e))}(\phi(e))}{\prod_{e \in E} \sqrt{\xi_N^{(\gamma(e))}}} \\
    =: &\sum_{\pi \in \mcal{P}(V)} \sum_{\phi: V^\pi \hookrightarrow [N]} \eta_N(\phi, \pi). 
\end{align*} \normalsize

We repeat two of the early steps in the proof of Lemma \ref{lem:isotropic_global_law_expectation}. In particular, our moment assumption \eqref{eq:finite_moments} implies that
\[
     \E\left[\prod_{s = 1}^r \left(\prod_{e \in E_s} \mbf{X}_N^{(\gamma(e))}(\phi(e)) - \E\left[\prod_{e \in E_s} \mbf{X}_N^{(\gamma(e))}(\phi(e))\right] \right) \right] = O_{p_1, \ldots, p_r}(1)
\]
uniformly in $(\pi, \phi)$ with
\[
    \E\left[\prod_{s = 1}^r \left(\prod_{e \in E_s} \mbf{X}_N^{(\gamma(e))}(\phi(e)) - \E\left[\prod_{e \in E_s} \mbf{X}_N^{(\gamma(e))}(\phi(e))\right] \right) \right] = 0
\]
unless $\#([e]_\pi) \geq 2$ for every $e \in [E]_\pi^{(2)}$. Since we are considering central moments, we can say even more. In particular, we say that $T_s$ and $T_{s'}$ have an \emph{edge overlay in $T^\pi$} if there exist edges $e_s \in E_s$ and $e_{s'} \in E_{s'}$ such that $[e_s]_\pi = [e_{s'}]_\pi$. The edge overlays define a natural equivalence relation $\sim_{E, \pi}$ on $[r]$ as follows:
\[
     s \sim_{E, \pi} s' \iff \exists s_0, \ldots, s_n \in [r] : T_{s_{t-1}} \text{ and } T_{s_{t}} \text{ have an edge overlay in } T^\pi \text{ for every } t \in [n],
\]
where $s_0 = s$ and $s_n = s'$. We use the notation $[s]_{E, \pi} = \{s' \in [r]: s' \sim_{E, \pi} s\} \in [r]/\sim_{E, \pi}$ to avoid confusion with $[v]_\pi \in V^\pi$ and $[e]_\pi \in [E]_\pi$. This allows us to factor
\begin{align*}
    &\E\left[\prod_{s = 1}^r \left(\prod_{e \in E_s} \mbf{X}_N^{(\gamma(e))}(\phi(e)) - \E\left[\prod_{e \in E_s} \mbf{X}_N^{(\gamma(e))}(\phi(e))\right] \right) \right] \\
    = & \prod_{[s]_{E, \pi} \in [r]/\sim_{E, \pi}} \E\left[\prod_{s' \in [s]_{E, \pi}} \left(\prod_{e \in E_{s'}} \mbf{X}_N^{(\gamma(e))}(\phi(e)) - \E\left[\prod_{e \in E_{s'}} \mbf{X}_N^{(\gamma(e))}(\phi(e))\right] \right) \right], 
\end{align*}
which is equal to $0$ by the centering unless $\#([s]_{E, \pi}) \geq 2$ for every $s \in [r]$. In other words, the expectation vanishes unless every test graph $T_s$ has an edge overlay in $T^\pi$ with at least one other test graph $T_{s'}$. This leads us to define 
\[
    \mcal{P}_\times(V) := \{\pi \in \mcal{P}_+(V): \#([s]_{E, \pi}) \geq 2 \text{ for every } s \in [r]\},
\]
where we recall the definition of $\mcal{P}_+(V)$ in \eqref{eq:restricted_partitions}. The central moment calculation then reduces to 
\begin{equation*}
    \E\left[\prod_{s = 1}^r \left(\Tr\left(p_s(\mcal{Z}_N)\mbf{x}_N^{(s)} {\mbf{y}_N^{(s)}}^*\right) - \E\left[\Tr\left(p_s(\mcal{Z}_N)\mbf{x}_N^{(s)}{\mbf{y}_N^{(s)}}^*\right) \right] \right) \right] = \sum_{\pi \in \mcal{P}_{\times}(V)} \sum_{\phi: V^\pi \hookrightarrow [N]} \eta_N(\phi, \pi),
\end{equation*}
where
\[
    \eta_N(\phi, \pi) = O_{p_1, \ldots, p_r}  \left( \left[\prod_{s = 1}^r \left|\mbf{x}_N^{(s)}(\phi([v_{d_s}^{(s)}]_\pi))\right| \left|\mbf{y}_N^{(s)}(\phi([v_0^{(s)}]_\pi))\right|\right] \frac{\prod_{e \in E} \mbf{B}_N^{(\gamma(e))}(\phi(e))}{\prod_{e \in E} \sqrt{\xi_N^{(\gamma(e))}}} \right).
\] 

The equivalence relation $\sim_{[E]_\pi}$ is not necessarily equal to the equivalence relation on $[r]$ defined by the connected components of $T^\pi = (\sqcup_{s = 1}^r T_s)^\pi$. In particular, if 
\[
    \msr{V}([s]_{E, \pi}) := \{[v]_\pi \in V^\pi: v \in T_{s'} \text{ for some } s' \sim_{E, \pi} s\},
\]
then it could be that $\msr{V}([s]_{E, \pi}) \cap \msr{V}([s']_{E, \pi}) \neq \emptyset$ for $[s]_{E, \pi} \neq [s']_{E, \pi}$. Nevertheless, it is true that
\begin{align*}
    &\left[\prod_{s = 1}^r \left|\mbf{x}_N^{(s)}(\phi([v_{d_s}^{(s)}]_\pi))\right| \left|\mbf{y}_N^{(s)}(\phi([v_0^{(s)}]_\pi))\right|\right] \frac{\prod_{e \in E} \mbf{B}_N^{(\gamma(e))}(\phi(e))}{\prod_{e \in E} \sqrt{\xi_N^{(\gamma(e))}}} \\
    = &\prod_{[s]_{E, \pi} \in [r]/\sim_{E, \pi}} \left( \prod_{s' \in [s]_{E, \pi}} \left[ \left|\mbf{x}_N^{(s')}(\phi([v_{d_{s'}}^{(s')}]_\pi))\right| \left|\mbf{y}_N^{(s')}(\phi([v_0^{(s')}]_\pi))\right| \frac{\prod_{e \in E_{s'}} \mbf{B}_N^{(\gamma(e))}(\phi(e))}{\prod_{e \in E_{s'}} \sqrt{\xi_N^{(\gamma(e))}}} \right] \right),
\end{align*}
whence 
\begin{align*}
    &\sum_{\phi: V^\pi \hookrightarrow [N]} \left[\prod_{s = 1}^r \left|\mbf{x}_N^{(s)}(\phi([v_{d_s}^{(s)}]_\pi))\right| \left|\mbf{y}_N^{(s)}(\phi([v_0^{(s)}]_\pi))\right|\right] \frac{\prod_{e \in E} \mbf{B}_N^{(\gamma(e))}(\phi(e))}{\prod_{e \in E} \sqrt{\xi_N^{(\gamma(e))}}} \\
    \leq &\prod_{[s]_{E, \pi} \in [r]/\sim_{E, \pi}} \sum_{\phi: \msr{V}([s]_{E, \pi}) \to [N]} \prod_{s' \in [s]_{E, \pi}} \left[ \left|\mbf{x}_N^{(s')}(\phi([v_{d_{s'}}^{(s')}]_\pi))\right| \left|\mbf{y}_N^{(s')}(\phi([v_0^{(s')}]_\pi))\right| \frac{\prod_{e \in E_{s'}} \mbf{B}_N^{(\gamma(e))}(\phi(e))}{\prod_{e \in E_{s'}} \sqrt{\xi_N^{(\gamma(e))}}} \right].
\end{align*}
Thus, it suffices to prove that
\begin{align*}
    &\sum_{\phi: \msr{V}([s]_{E, \pi}) \to [N]} \prod_{s' \in [s]_{E, \pi}} \left[ \left|\mbf{x}_N^{(s')}(\phi([v_{d_{s'}}^{(s')}]_\pi))\right| \left|\mbf{y}_N^{(s')}(\phi([v_0^{(s')}]_\pi))\right| \frac{\prod_{e \in E_{s'}} \mbf{B}_N^{(\gamma(e))}(\phi(e))}{\prod_{e \in E_{s'}} \sqrt{\xi_N^{(\gamma(e))}}} \right] \\
    = &O_{p_1, \ldots, p_r}\left(\left[\min_{i \in I_{p_1, \ldots, p_r}} \sqrt{\xi_N^{(i)}} \right]^{-\#([s]_{E, \pi})} \right).
\end{align*}
Without loss of generality, we may then assume that there is only one equivalence class $[s]_{E, \pi} = [r]$, which allows us to cut down on notation. In particular, we have reduced the problem to establishing
\begin{equation}\label{eq:deterministic_bound}
\begin{aligned}
    &\sum_{\phi: V^\pi \to [N]} \prod_{s = 1}^r \left[ \left|\mbf{x}_N^{(s)}(\phi([v_{d_s}^{(s)}]_\pi))\right| \left|\mbf{y}_N^{(s)}(\phi([v_0^{(s)}]_\pi))\right| \frac{\prod_{e \in E_s} \mbf{B}_N^{(\gamma(e))}(\phi(e))}{\prod_{e \in E_s} \sqrt{\xi_N^{(\gamma(e))}}} \right] \\
    = &O_{p_1, \ldots, p_r}\left(\left[ \min_{i \in I_{p_1, \ldots, p_r}} \sqrt{\xi_N^{(i)}} \right]^{-r} \right).
\end{aligned}
\end{equation}

Intuitively, each test graph $T_s$ is responsible for a factor of $\left[\min_{i \in I_{p_1, \ldots, p_r}} \sqrt{\xi_N^{(i)}}\right]^{-1}$ via the unit vectors $\mbf{x}_N^{(s)}, \mbf{y}_N^{(s)}$ or a defect in the underlying simple graph $\underline{G^\pi} = (V^\pi, [E]_\pi^{(2)})$ from an edge overlay. To formalize this, it will be convenient to introduce some additional notation to gather the relevant terms. We define
\begin{align*}
    (w_1, \ldots, w_{2r}) &:= (v_0^{(1)}, v_{d_1}^{(1)},  \ldots, v_0^{(r)}, v_{d_1}^{(r)}); \\
    (\mbf{u}_N^{(1)}, \ldots, \mbf{u}_N^{(2r)}) &:= (|\mbf{x}_N^{(1)}|, |\mbf{y}_N^{(1)}|, \ldots, |\mbf{x}_N^{(r)}|, |\mbf{y}_N^{(r)}|) \in (\sphr_{\geq 0}^{N-1})^{2r},
\end{align*}
in which case
\begin{align*}
    &\prod_{s = 1}^r \left[ \left|\mbf{x}_N^{(s)}(\phi([v_{d_s}^{(s)}]_\pi))\right| \left|\mbf{y}_N^{(s)}(\phi([v_0^{(s)}]_\pi))\right| \frac{\prod_{e \in E_s} \mbf{B}_N^{(\gamma(e))}(\phi(e))}{\prod_{e \in E_s} \sqrt{\xi_N^{(\gamma(e))}}} \right] \\
    = &\prod_{s = 1}^{2r} \mbf{u}_N^{(s)}(\phi([w_s]_\pi)) \frac{\prod_{[e] \in [E]_\pi} \mbf{B}_N^{(\gamma(\lfloor e \rfloor_\pi))}(\phi(\lfloor e \rfloor_\pi))}{\prod_{[e] \in [E]_\pi} \prod_{e' \in [e]_\pi} \sqrt{\xi_N^{(\gamma(e'))}}} \\
    \leq &\prod_{s = 1}^{2r} \mbf{u}_N^{(s)}(\phi([w_s]_\pi)) \frac{\prod_{[e] \in [F]_\pi} \mbf{B}_N^{(\gamma(\lfloor e \rfloor_\pi))}(\phi(\lfloor e \rfloor_\pi))}{\prod_{[e] \in [E]_\pi} \prod_{e' \in [e]_\pi} \sqrt{\xi_N^{(\gamma(e'))}}}
\end{align*}
for any spanning tree $(V^\pi, [F]_\pi)$ of $\underline{G^\pi}$. The unit vectors further reduce the numerator by virtue of the Cauchy-Schwarz inequality, which implies that for any $S \subset [N]$,
\begin{equation}\label{eq:cauchy_schwarz}
\begin{aligned}
    \sum_{i \in S} \mbf{u}_N^{(s)}(i) &\leq \sqrt{\#(S)}, \\
    \sum_{i \in S} \mbf{u}_N^{(s)}(i)\mbf{u}_N^{(s')}(i) &\leq 1;
\end{aligned}
\end{equation}
however, in general, one cannot do better than
\begin{equation}\label{eq:cauchy_schwarz_stack}
    \sum_{i \in S} \prod_{s \in \msr{S}} \mbf{u}_N^{(s)}(i) \leq 1
\end{equation}
for $\msr{S} \subset [2r]$ such that $\#(\msr{S}) \geq 2$, where we have used the fact that $\norm{\mbf{u}_N^{(s)}}_\infty \leq \norm{\mbf{u}_N^{(s)}}_2 = 1$.

To keep track of the unit vectors, we distinguish the vertices $V_{\op{out}} := \{w_1, \ldots, w_{2r}\} \subset V$ by calling them \emph{outer}. We refer to the remaining vertices $V_{\op{in}} := V \setminus V_{\op{out}}$ as \emph{inner}. For a partition $\pi \in \mcal{P}(V)$, we separate the blocks according to their inner/outer composition:
\[
    \pi^{(a, b)} = \{B \in \pi : \#(B \cap V_{\op{out}}) = a, \#(B \cap V_{\op{in}}) = b\}.
\]
We separate the vertices in a similar manner:
\[
    V_\pi^{(a, b)} = \bigcup_{B \in \pi^{(a, b)}} B.
\]
By a slight abuse of notation, we also write expressions such as $\pi^{(a, \geq b)}$ and $V_\pi^{(a, \geq b)}$ for the obvious analogues.

Without the unit vectors, we have the equality
\[
    \sum_{\phi: V^\pi \to [N]} \frac{\prod_{[e] \in [F]_\pi} \mbf{B}_N^{(\gamma(\lfloor e \rfloor_\pi))}(\phi(\lfloor e \rfloor_\pi))}{\prod_{[e] \in [E]_\pi} \prod_{e' \in [e]_\pi} \sqrt{\xi_N^{(\gamma(e'))}}} = \frac{N \prod_{[e] \in [F]_\pi} \xi_N^{(\gamma(\lfloor e \rfloor_\pi))}}{\prod_{[e] \in [E]_\pi} \prod_{e' \in [e]_\pi} \sqrt{\xi_N^{(\gamma(e'))}}},
\]
where $\#(\pi) = \#([F]_\pi) + 1$ since $(V^\pi, [F]_\pi)$ is a spanning tree. We know how to remove the factor of $N$ in the numerator at the cost of either two blocks in $\#(\pi^{(1 , \geq 0)})$ or one block in $\#(\pi^{(\geq 2, \geq 0)})$ from the proof of Proposition \ref{prop:isotropic_global_law}. When assigning the remaining values of $\phi$, we can use \eqref{eq:cauchy_schwarz} and \eqref{eq:cauchy_schwarz_stack} to reduce a term $\xi_N^{(\gamma(\lfloor e \rfloor_\pi))}$ in the numerator to either $\sqrt{\xi_N^{(\gamma(\lfloor e \rfloor_\pi))}}$ (if the block $B \in V^\pi$ belongs to $\pi^{(1, \geq 0)}$) or $1$ (if the block $B \in V^\pi$ belongs to $\pi^{(\geq 2, \geq 0)}$). Since $\pi \in \mcal{P}_+(V)$, we still have \eqref{eq:normalization_balance} to handle the blocks $B \in \pi^{(0, \geq 1)}$. Thus, reintroducing the unit vectors, we arrive at the bound 
\begin{align*}
    &\sum_{\phi: V^\pi \to [N]} \prod_{s = 1}^{2r} \mbf{u}_N^{(s)}(\phi([w_s]_\pi)) \frac{\prod_{[e] \in [F]_\pi} \mbf{B}_N^{(\gamma(\lfloor e \rfloor_\pi))}(\phi(\lfloor e \rfloor_\pi))}{\prod_{[e] \in [E]_\pi} \prod_{e' \in [e]_\pi} \sqrt{\xi_N^{(\gamma(e'))}}} \\
    \leq &\left[ \min_{i \in I_{p_1, \ldots, p_r}} \xi_N^{(i)} \right]^{\left(\left[\#(\pi) - 1\right] - \left[\frac{\#(\pi^{(1, \geq 0)})}{2} + \#(\pi^{(\geq 2, \geq 0)}) - 1 \right]\right) - \frac{\sum_{s = 1}^r d_s}{2}},
\end{align*}
where $d_s$ is the number of edges in the test graph $T_s$. Thus, \eqref{eq:deterministic_bound} will follow if we can prove that for $\pi \in \mcal{P}_+(V)$ such that $[s]_{E, \pi} = [r]$, 
\[
    \left[\#(\pi) -  \frac{\#(\pi^{(1, \geq 0)})}{2} - \#(\pi^{(\geq 2, \geq 0)})\right] - \frac{\sum_{s = 1}^r d_s}{2} \leq -\frac{r}{2}, 
\]
or, equivalently,
\begin{equation}\label{eq:combinatorial_concentration_bound}
    \frac{\#(\pi^{(1, \geq 1)})}{2} + \#(\pi^{(0, \geq 1)}) + \frac{r}{2} \leq \frac{\sum_{s = 1}^r d_s}{2},
\end{equation}
where we have used the fact that $\pi \in \mcal{P}_+(V)$ forces $\pi^{(1, \geq 0)} = \pi^{(1, \geq 1)}$.

We prove \eqref{eq:combinatorial_concentration_bound} by induction on the total number of edges $D = \sum_{s = 1}^r d_s$ with the base cases $D = 2, 3$ (cf.\@ \cite[Proposition 4.4]{AGV22}). Note that centrality allows us to restrict to $r \geq 2$ since otherwise the moment bound is trivially true.  The case of $D = 2$ then corresponds to $r = 2$ and $d_1 = d_2 = 1$, which has no inner vertices. Thus, $\pi^{(1, \geq 1)} = \pi^{(0, \geq 1)} = \emptyset$, and \eqref{eq:combinatorial_concentration_bound} follows.

If $D = 3$, then there are two possibilities. First, it could be that $r = 3$ and $d_1 = d_2 = d_3 = 1$. As before, there are no inner vertices in this case, and so we are done. If $r = 2$, then $\{d_1, d_2\} = \{1, 2\}$. Thus, there is exactly one inner vertex; however, the mandatory edge overlay between $T_1$ and $T_2$ means that this lone inner vertex will be merged with at least one outer vertex, whence $\#(\pi^{(0, \geq 1)}) = 0$ and $\#(\pi^{(1, \geq 1)}) \leq 1$. Plugging everything in, we again have \eqref{eq:combinatorial_concentration_bound}.

Now suppose that $D \geq 4$. If $\#(\pi^{(0, 1)}) \leq \#(\pi^{(1, \geq 2)})$, then we are done. Indeed, in this case, \small
\begin{align*}
    \frac{\#(\pi^{(1, \geq 1)})}{2} + \#(\pi^{(0, \geq 1)}) &= \frac{\#(\pi^{(1, 1)}) + \#(\pi^{(1, \geq 2)})}{2} + \#(\pi^{(0,  1)}) + \#(\pi^{(0, \geq 2)}) \\
    &\leq \frac{\#(\pi^{(1, 1)})}{2} + \#(\pi^{(1, \geq 2)}) + \frac{\#(\pi^{(0,  1)})}{2} + \#(\pi^{(0, \geq 2)}) \\
    &\leq \frac{\#(V_\pi^{(1, 1)} \cap V_{\op{in}})}{2} + \frac{\#(V_\pi^{(1, \geq 2)} \cap V_{\op{in}})}{2} + \frac{\#(V_\pi^{(0,  1)} \cap V_{\op{in}})}{2} + \frac{\#(V_\pi^{(0, \geq 2)} \cap V_{\op{in}})}{2} \\
    &\leq \frac{\#(V_{\op{in}})}{2} \\
    &= \frac{\sum_{s = 1}^r (d_s - 1)}{2} \\
    &= \frac{D - r}{2}.
\end{align*} \normalsize
So, we can assume that $\#(\pi^{(0, 1)}) > \#(\pi^{(1, \geq 2)})$. Let $B \in \pi^{(0, 1)} \neq \emptyset$. This means that $B = \{v_0\}$ consists of a single inner vertex, say belonging to the test graph $T_{s_0}$. Since $\pi \in \mcal{P}_+(V)$, it must be that the two vertices $v_1, v_2$ adjacent to $v_0$ in $T_{s_0}$ are identified by $\pi$. This corresponds to pinching off the two edges $e_1, e_2$ incident to $v_0$ in $T_{s_0}$. The block $B$ is then necessarily a leaf in $\underline{T^\pi}$ with exactly two incident edges in $T^\pi$. The remainder of $T^\pi$ can therefore be constructed as a quotient of the disjoint union of $(T_s)_{s \in [r]\setminus\{s_0\}}$ and a shortened version of $T_{s_0}$ by two edges. See Figure \ref{fig:induction} for an illustration. Note that we must have $d_{s_0} \geq 3$; otherwise, $d_{s_0} = 2$ and an edge overlay between $T_{s_0}$ and any other $T_s$ would contradict $B \in \pi^{(0, 1)}$.

\begin{figure}
      \begin{tikzpicture}[shorten > = 2.5pt, baseline=(current  bounding  box.center)]

    \node at (2, -1.125) {$T_{s_0}$};
    
    \draw[fill=black] (0, 0) circle (1.75pt);
    \draw[fill=black] (1,0) circle (1.75pt);
    \draw[fill=black] (2, 0) circle (1.75pt);
    \node at (2, -.375) {\footnotesize$v_1$\normalsize};
    \draw[fill=black] (3, 0) circle (1.75pt);
    \node at (3, -.375) {\footnotesize$v_0$\normalsize};
    \draw[fill=black] (4, 0) circle (1.75pt);
    \node at (4, -.375) {\footnotesize$v_2$\normalsize};

    \draw[semithick, ->] (1, 0) to (0, 0);
    \draw[semithick, ->] (2, 0) to (1, 0);
    \draw[semithick, ->] (3, 0) to node[pos=.375, above] {\footnotesize$e_1$\normalsize} (2, 0);
    \draw[semithick, ->] (4, 0) to node[pos=.375, above] {\footnotesize$e_2$\normalsize} (3, 0);

    \draw[dashed] (4,0) arc (0:180:1) node[midway, above]{\footnotesize$\pi$\normalsize};
    
    \draw[fill=black] (7, 0) circle (1.75pt);
    \draw[fill=black] (8, 0) circle (1.75pt);
    \draw[fill=black] (9, 0) circle (1.75pt);
    \node at (9, -.375) {\footnotesize$\{v_1, v_2\}$\normalsize};
    \draw[fill=black] (9, 1) circle (1.75pt);
    \node at (9, 1.5) {\footnotesize$B$\normalsize};

    \draw[semithick, ->] (8, 0) to (7, 0);
    \draw[semithick, ->] (9, 0) to (8, 0);
    \draw[semithick, ->] (8.9375, 1) to (8.9375, 0);
    \draw[semithick, ->] (9.0625, 0) to (9.0625, 1);

    \node at (13, -1.125) {$\wtilde{T}_{s_0}$};
    
    \draw[fill=black] (12, 0) circle (1.75pt);
    \draw[fill=black] (13, 0) circle (1.75pt);
    \draw[fill=black] (14, 0) circle (1.75pt);
    \node at (14, -.375) {\footnotesize$\tilde{v}_2$\normalsize};

    \draw[semithick, ->] (13, 0) to (12, 0);
    \draw[semithick, ->] (14, 0) to (13, 0);    
  \end{tikzpicture}
\caption{An example of the pinching off of the two edges adjacent to $v_0$ in $T_{s_0}$ as necessitated by $\pi \in \mcal{P}_+(V)$. Since $B = \{v_0\} \in \pi^{(0, 1)}$, this is the only way the edges adjacent to $v_0$ can satisfy the condition $\#([e]_\pi) \geq 2$.}\label{fig:induction}
\end{figure}
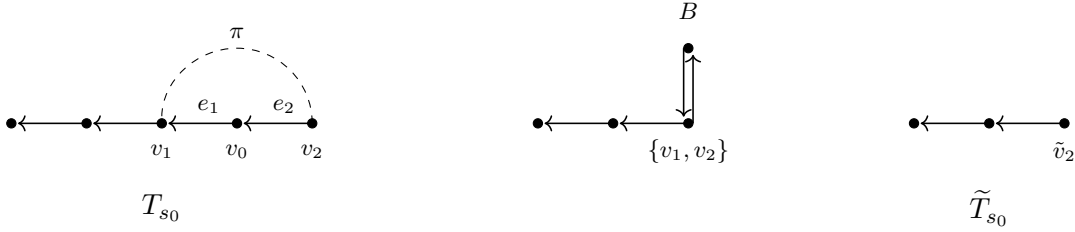

To apply the induction hypothesis, let $\wtilde{T}_{s_0}$ be the test graph obtained from $T_{s_0}$ by removing $v_0$, its two incident edges, and merging $v_1$ and $v_2$ into a vertex $\tilde{v}_2$. If $v_2$ is inner (resp., outer), then so too is $\tilde{v}_2$ in $\wtilde{T}_{s_0}$. The partition $\pi$ defines a natural partition $\tilde{\pi}$ of the vertices of the disjoint union $\sqcup_{s \in [r]\setminus\{s_0\}} T_s \sqcup \wtilde{T}_{s_0}$ as follows. For a block $B' \in \pi\setminus\{B\}$, we define
\begin{equation}\label{eq:projected_block}
    f(B') = \begin{dcases}
    (B'\setminus\{v_1, v_2\})\cup\{\tilde{v}_2\} &\text{if } v_1, v_2 \in B'; \\
    B' &\text{else}.
    \end{dcases}
\end{equation}
These new blocks make up the partition
\[
    \tilde{\pi} = \{f(B') : B' \in \pi\setminus\{B\}\} \in \mcal{P}\Big((V\setminus\{v_0, v_1, v_2\}) \cup \{\tilde{v}_2\}\Big).
\]
Since $B$ was a leaf in $\underline{T^\pi}$ with exactly two incident edges $e_1, e_2$ in $T^\pi$, the partition $\tilde{\pi}$ still satisfies
\begin{equation}\label{eq:stacked_edges}
    \#([e]_{\tilde{\pi}}) \geq 2, \qquad \forall e \in E\setminus\{e_1, e_2\},
\end{equation}
meaning $\tilde{\pi} \in \mcal{P}_+\Big((V\setminus\{v_0, v_1, v_2\}) \cup \{\tilde{v}_2\}\Big)$. Furthermore, as noted earlier, neither $e_1$ nor $e_2$ can participate in an edge overlay between $T_{s_0}$ and some other $T_s$ due to the fact that $B \in \pi^{(1, 0)}$. So, any such overlay is preserved in $(\sqcup_{s \in [r]\setminus\{s_0\}} T_s \sqcup \wtilde{T}_{s_0})^{\tilde{\pi}}$, meaning there is still only one equivalence class $[s]_{E\setminus\{e_1, e_2\}, \tilde{\pi}} = [r]$. The induction hypothesis then allows us to conclude that
\[
    \frac{\#(\tilde{\pi}^{(1, \geq 1)})}{2} + \#(\tilde{\pi}^{(0, \geq 1)}) + \frac{r}{2} \leq \frac{\sum_{s \in [r]\setminus{s_0}} d_s + \tilde{d}_{s_0}}{2} = \frac{\sum_{s = 1}^r d_s - 2}{2}.
\]

We must now relate $\tilde{\pi}^{(1, \geq 1)}$ and $\tilde{\pi}^{(0, \geq 1)}$ to $\pi^{(1, \geq 1)}$ and $\pi^{(0, \geq 1)}$ respectively. By definition \eqref{eq:projected_block}, $f$ changes the composition of exactly one block $[v_1]_\pi = [v_2]_\pi$, decreasing the number of inner vertices in this block by one and leaving all other blocks untouched. If $v_2$ is outer, then $[v_2]_\pi \in \pi^{(\geq 1, \geq 1)}$ and $f([v_2]_\pi) \in \tilde{\pi}^{(\geq 1, \geq 0)}$. If $v_2$ is inner, then $[v_2]_\pi \in \pi^{(\geq 0, \geq 2)}$ and $f([v_2]_\pi) \in \tilde{\pi}^{(\geq 0, \geq 1)}$. In either case, the map $f$ restricts to a bijection between $\pi^{(0, \geq 1)}\setminus\{B\}$ and $\tilde{\pi}^{(0, \geq 1)}$, whence
\[
    \#(\tilde{\pi}^{(0, \geq 1)}) = \#(\pi^{(0, \geq 1)}) - 1.
\]
Similarly, $f$ restricts to a bijection between $\pi^{(1, \geq 0)}$ and $\tilde{\pi}^{(1, \geq 0)}$; however, as we have already seen, condition \eqref{eq:restricted_partitions} (resp., \eqref{eq:stacked_edges}) forces $\pi^{(1, \geq 0)} = \pi^{(1, \geq 1)}$ (resp., $\tilde{\pi}^{(1, \geq 0)} = \tilde{\pi}^{(1, \geq 1)}$), whence
\[
    \#(\pi^{(1, \geq 1)}) = \#(\tilde{\pi}^{(1, \geq 1)}). 
\]
Putting everything together, we obtain
\begin{align*}
    \frac{\#(\pi^{(1, \geq 1)})}{2} + \#(\pi^{(0, \geq 1)}) + \frac{r}{2} &= \#(\tilde{\pi}^{(1, \geq 1)}) + \#(\tilde{\pi}^{(0, \geq 1)}) + 1 + \frac{r}{2}\\
    &\leq \frac{\sum_{s = 1}^r d_s - 2}{2} + 1 \\
    &= \frac{\sum_{s = 1}^r d_s}{2},
\end{align*}
as was to be shown. 
\end{proof}

\begin{rem}\label{rem:central_moments_k_regular_wigner}
In the case of independent $(k_N^{(i)})_{i \in I}$-regular Wigner matrices, one needs only to carry forward the same modifications from Remark \ref{rem:expectation_k_regular_wigner}. The upper bound in Lemma \ref{lem:central_moments} is easily seen to be sharp and can be achieved by overlaying copies of lines to obtain a forest of double trees.
\end{rem}

\begin{cor}[Concentration]\label{cor:concentration}
For $p(\vec{z}) \in \C\langle z_i : i \in I \rangle$, $r \in \N$, and $\mbf{x}_N, \mbf{y}_N \in \sphr^{N-1}$,
\[
    \pr\left(\left|\Tr(p(\mcal{Z}_N) \mbf{x}_N \mbf{y}_N^*) - \E\left[\Tr(p(\mcal{Z}_N) \mbf{x}_N \mbf{y}_N^*)\right]\right| \geq \varepsilon \right) = O_{p, r, \varepsilon}\left(\left[\min_{i \in I_p} \sqrt{\xi_N^{(i)}}\right]^{-r}\right).
\]
\end{cor}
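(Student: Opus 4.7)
The plan is to deduce the concentration bound from Lemma \ref{lem:central_moments} via Markov's inequality applied to a sufficiently high even moment of the centered trace. Set $Y_N = \Tr(p(\mcal{Z}_N)\mbf{x}_N\mbf{y}_N^*) - \E[\Tr(p(\mcal{Z}_N)\mbf{x}_N\mbf{y}_N^*)]$. For any positive integer $m$, Markov's inequality gives
\[
    \pr(|Y_N| \geq \varepsilon) \leq \varepsilon^{-2m}\,\E[|Y_N|^{2m}] = \varepsilon^{-2m}\,\E\left[Y_N^m \, \overline{Y_N}^m\right],
\]
so it suffices to bound the right-hand side as a product of $2m$ centered factors of the type that appears in Lemma \ref{lem:central_moments}.

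The key observation is that the complex conjugate of such a trace is again of the same form. Indeed, using the self-adjointness of each $\mbf{\Xi}_N^{(i)}$,
\[
    \overline{\Tr(p(\mcal{Z}_N)\mbf{x}_N\mbf{y}_N^*)} = \overline{\langle p(\mcal{Z}_N)\mbf{x}_N, \mbf{y}_N\rangle} = \langle p(\mcal{Z}_N)^*\mbf{y}_N, \mbf{x}_N\rangle = \Tr(p^*(\mcal{Z}_N)\mbf{y}_N\mbf{x}_N^*),
\]
where $p^*(\vec{z}) \in \C\langle z_i : i \in I\rangle$ is the polynomial obtained from $p(\vec{z})$ by conjugating each coefficient and reversing the order of the variables within each monomial. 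Consequently, $\overline{Y_N}$ is a centered quantity of exactly the same form as $Y_N$, but with $p$ replaced by $p^*$ and the roles of $\mbf{x}_N$ and $\mbf{y}_N$ swapped; in particular, $I_{p^*} = I_p$.

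I would then apply Lemma \ref{lem:central_moments} with its parameter $r$ taken to be $2m$, choosing $(p_s)_{s=1}^{2m}$ to consist of $m$ copies of $p$ followed by $m$ copies of $p^*$, and the corresponding pairs of unit vectors $(\mbf{x}_N^{(s)}, \mbf{y}_N^{(s)})$ to be $m$ copies of $(\mbf{x}_N, \mbf{y}_N)$ followed by $m$ copies of $(\mbf{y}_N, \mbf{x}_N)$. This yields
\[
    \E[|Y_N|^{2m}] = O_{p, m}\left(\left[\min_{i \in I_p} \sqrt{\xi_N^{(i)}}\right]^{-2m}\right).
\]
Choosing $m = \lceil r/2 \rceil$ and combining with Markov's inequality produces the stated bound with the claimed dependence on $p$, $r$, and $\varepsilon$. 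The argument is essentially routine once the complex conjugate has been identified as a trace of the required form, so I do not anticipate any substantial obstacle beyond this bookkeeping.
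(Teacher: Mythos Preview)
Your proposal is correct and follows essentially the same approach as the paper: identify $\overline{Y_N}$ as a centered trace of the same form via the involution $p \mapsto p^*$ and the swap $(\mbf{x}_N,\mbf{y}_N)\mapsto(\mbf{y}_N,\mbf{x}_N)$, then apply Lemma~\ref{lem:central_moments} with $2m$ factors and Markov's inequality. The only cosmetic difference is that the paper phrases the reduction as ``it suffices to prove the result for $r=2m$ even'' (using $b_N^{(i)}\gg 1$), whereas you explicitly take $m=\lceil r/2\rceil$; these amount to the same thing.
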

\begin{proof}
Since $b_N^{(i)} \gg 1$ for each $i \in I$, it suffices to prove the result for $r = 2m$ even. We define a conjugate linear involution $*: \C\langle z_i : i \in I \rangle \to \C\langle z_i : i \in I \rangle$ by its action on monomials:
\[
    (z_{i(1)} \cdots z_{i(d)})^* = z_{i(d)} \cdots z_{i(1)}.
\]
Since the matrices $\mcal{Z}_N = (\mbf{\Xi}_N^{(i)})_{i \in I}$ are Hermitian, this operation commutes with the usual matrix adjoint:
\[
    [p(\mcal{Z}_N)]^* = p^*(\mcal{Z}_N),
\]
We use this to write the complex conjugate of our weighted trace as yet another weighted trace
\[
    \overline{\Tr(p(\mcal{Z}_N) \mbf{x}_N \mbf{y}_N^*)} = \Tr\left(\left[p(\mcal{Z}_N) \mbf{x}_N \mbf{y}_N^*\right]^*\right) = \Tr(p^*(\mcal{Z}_N) \mbf{y}_N \mbf{x}_N^*).
\]
In particular, the squared modulus can be written as a product
\begin{align*}
    &\left|\Tr(p(\mcal{Z}_N) \mbf{x}_N \mbf{y}_N^*) - \E\left[\Tr(p(\mcal{Z}_N) \mbf{x}_N \mbf{y}_N^*)\right]\right|^2 \\
    = &\Big(\Tr(p(\mcal{Z}_N) \mbf{x}_N \mbf{y}_N^*) - \E\left[\Tr(p(\mcal{Z}_N) \mbf{x}_N \mbf{y}_N^*)\right]\Big)\Big(\Tr(p^*(\mcal{Z}_N) \mbf{y}_N \mbf{x}_N^*) - \E\left[\Tr(p^*(\mcal{Z}_N) \mbf{y}_N \mbf{x}_N^*)\right]\Big).
\end{align*}
The result then follows from Lemma \ref{lem:central_moments} and Markov's inequality.
\end{proof}

We can now prove the isotropic global law. We recall the notation $\pto$ for convergence in probability.

\begin{prop}[Isotropic global law]\label{prop:isotropic_global_law}
Let $(\mbf{x}_N^{(s)})_{s = 1}^r, (\mbf{y}_N^{(s)})_{s = 1}^r \subset \sphr^{N-1}$ be such that
\[
  \lim_{N \to \infty} \inn{\mbf{x}_N^{(s)}}{\mbf{y}_N^{(s)}} = c_s.
\]
If $p_1(\vec{z}), \ldots, p_r(\vec{z}) \in \C\langle z_i: i \in I\rangle$, then  
\[
  \Tr\left(\prod_{s = 1}^r \mbf{x}_{N}^{(s-1)}{\mbf{y}_{N}^{(s)}}^* p_s(\mcal{Z}_N)\right) \pto \prod_{s = 1}^r \big[c_{s}\tau_{\mathcal{Z}}(p_s)\big],
\]
where $\mbf{x}_N^{(r)} = \mbf{x}_N^{(0)}$. If $\min_{i \in I_{p_1, \ldots, p_r}} \xi_N^{(i)} \gg N^\varepsilon$ for some $\varepsilon > 0$, then this convergence can be upgraded to the almost sure sense:
\[
  \lim_{N \to \infty} \Tr\left(\prod_{s = 1}^r \mbf{x}_{N}^{(s-1)}{\mbf{y}_{N}^{(s)}}^* p_s(\mcal{Z}_N)\right) \aseq \prod_{s = 1}^r \big[c_{s}\tau_{\mathcal{Z}}(p_s)\big].    
\]
\end{prop}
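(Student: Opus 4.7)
The key observation is that the trace in question factors as a product of scalar inner products. Using $\mbf{x}_N^{(r)}=\mbf{x}_N^{(0)}$ and the identity $\mbf{x}^{(s-1)}{\mbf{y}^{(s)}}^*p_s\cdot\mbf{x}^{(s)}{\mbf{y}^{(s+1)}}^*p_{s+1}=\langle p_s\mbf{x}^{(s)},\mbf{y}^{(s)}\rangle\,\mbf{x}^{(s-1)}{\mbf{y}^{(s+1)}}^*p_{s+1}$, an easy induction collapses the product of rank-one factors and yields
\[
  \Tr\!\left(\prod_{s=1}^r \mbf{x}_{N}^{(s-1)}{\mbf{y}_{N}^{(s)}}^* p_s(\mcal{Z}_N)\right)=\prod_{s=1}^r \langle p_s(\mcal{Z}_N)\mbf{x}_N^{(s)},\mbf{y}_N^{(s)}\rangle=\prod_{s=1}^r \Tr\!\left(p_s(\mcal{Z}_N)\mbf{x}_N^{(s)}{\mbf{y}_N^{(s)}}^*\right).
\]
Thus the problem reduces to the asymptotic behavior of each individual weighted trace, and the joint structure of the $r$ factors is encoded purely through the common matrix $\mcal{Z}_N$.

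For the in-probability statement I would apply Lemma \ref{lem:isotropic_global_law_expectation} to each factor to get
\[
  \E\!\left[\Tr(p_s(\mcal{Z}_N)\mbf{x}_N^{(s)}{\mbf{y}_N^{(s)}}^*)\right]=\langle \mbf{x}_N^{(s)},\mbf{y}_N^{(s)}\rangle\,\tau(p_s)+O_{p_s}\!\left([\min_{i\in I_{p_s}}\sqrt{\xi_N^{(i)}}]^{-1}\right)\longrightarrow c_s\,\tau(p_s),
\]
since $\xi_N^{(i)}\to\infty$ under the standing assumption $b_N^{(i)}\gg 1$. Corollary \ref{cor:concentration} (with any fixed $r=1$, or trivially any $r\in\N$) gives concentration around the mean, so each factor converges in probability to $c_s\tau(p_s)$. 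As the limits are deterministic, the continuous mapping theorem (equivalently, iterated application of Slutsky's lemma) promotes this to convergence in probability of the product to $\prod_s c_s\tau(p_s)$.

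For the almost sure statement, the quantitative input is the tail bound in Corollary \ref{cor:concentration}: for any fixed $r'\in\N$ and $\varepsilon'>0$,
\[
  \pr\!\left(\left|\Tr(p_s(\mcal{Z}_N)\mbf{x}_N^{(s)}{\mbf{y}_N^{(s)}}^*)-\E[\cdot]\right|\geq \varepsilon'\right)=O_{p_s,r',\varepsilon'}\!\left([\min_{i\in I_{p_s}}\sqrt{\xi_N^{(i)}}]^{-r'}\right).
\]
Under $\min_i\xi_N^{(i)}\gg N^\varepsilon$, choosing $r'$ with $r'\varepsilon/2>1$ makes the right-hand side summable in $N$. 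The Borel-Cantelli lemma, applied to each factor for a sequence $\varepsilon'_k\downarrow 0$, then gives almost sure convergence of each factor to $c_s\tau(p_s)$, and a.s.\ convergence of the product follows by continuity.

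There is no genuine obstacle here since Lemma \ref{lem:isotropic_global_law_expectation} and Lemma \ref{lem:central_moments} have already done the heavy combinatorial work. The only nontrivial bookkeeping is the opening trace factorization, and the only mild care needed is matching the band-width rate to the exponent in the concentration bound in the a.s.\ case; both are straightforward.
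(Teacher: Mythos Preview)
Your proposal is correct and essentially identical to the paper's proof: both factor the trace into $\prod_{s=1}^r \Tr(p_s(\mcal{Z}_N)\mbf{x}_N^{(s)}{\mbf{y}_N^{(s)}}^*)$, invoke Lemma~\ref{lem:isotropic_global_law_expectation} and Corollary~\ref{cor:concentration} for each factor, and then upgrade to almost sure convergence via Borel--Cantelli by choosing the exponent in Corollary~\ref{cor:concentration} large enough. Your write-up is in fact slightly more explicit about the continuous mapping step and the choice of $r'$, but there is no substantive difference.
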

\begin{proof}
We start by rewriting the trace in question into a product of traces:
\[
    \Tr\left(\prod_{s = 1}^r \mbf{x}_{N}^{(s-1)}{\mbf{y}_{N}^{(s)}}^* p_s(\mcal{Z}_N)\right) = \prod_{s = 1}^r \left\langle p_s(\mcal{Z}_N)\mbf{x}_{N}^{(s)}, {\mbf{y}_{N}^{(s)}} \right\rangle = \prod_{s = 1}^r \Tr\left(p_s(\mcal{Z}_N) \mbf{x}_N^{(s)} {\mbf{y}_N^{(s)}}^*\right).
\]
Thus, it suffices to prove the stated convergence for a single term $\Tr\left(p_s(\mcal{Z}_N) \mbf{x}_N^{(s)} {\mbf{y}_N^{(s)}}^*\right)$. Convergence in probability follows from Lemma \ref{lem:isotropic_global_law_expectation} and Corollary \ref{cor:concentration}. If $\min_{i \in I_{p_1, \ldots, p_r}} \xi_N^{(i)} \gg N^\varepsilon$ for some $\varepsilon > 0$, then we can choose a sufficiently large value of $r$ in Corollary \ref{cor:concentration} to apply the Borel-Cantelli lemma and upgrade the convergence to the almost sure sense.
\end{proof}

\subsection{Proof of Theorem \ref{thm:spiked_rbm}}\label{sec:spiked_rbm}
We elaborate on the outline of the proof given in the introduction. While many of the details are routine, we commit them here for completeness. Recall the notation $\mbf{M}_N = \mbf{\Xi}_N + \mbf{A}_N = \sum_{k = 1}^N \lambda_k(\mbf{M}_N)\mbf{m}_N^{(k)}{\mbf{m}_N^{(k)}}^*$ for the spectral decomposition of the spiked RBM model, and likewise for the perturbation $\mbf{A}_N = \sum_{s = 1}^r \theta_s \mbf{a}_N^{(s)}{\mbf{a}_N^{(s)}}^*$. We assume that $\xi_N \gg N^{\varepsilon}$ for some $\varepsilon > 0$. We start with a straightforward consequence of the isotropic global law. 

\begin{lemma}\label{lem:spectral_measure}
For $s' \in [r]$, the spectral measure $\mu_{\mbf{M}_N}^{\mbf{a}_N^{(s')}}$ converges weakly almost surely to
\[
    \mu_{\theta_{s'}}(dx) = \frac{\indc{|x| \leq 2\sigma}}{2\pi}\frac{\sqrt{4\sigma^2 - x^2}}{\theta_{s'}^2 + \sigma^2 - \theta_{s'} x} \, dx + \indc{|\theta_{s'}| > \sigma} \left(1-\frac{\sigma^2}{\theta_{s'}^2}\right)\delta_{\theta_{s'} + \frac{\sigma^2}{\theta_{s'}}}(dx).
\]
\end{lemma}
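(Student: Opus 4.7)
The plan is a direct application of the method of moments: use the isotropic global law (Proposition \ref{prop:isotropic_global_law}) to compute the limiting moments of $\mu_{\mbf{M}_N}^{\mbf{a}_N^{(s')}}$, then invoke moment determinacy to identify the limit. By Definition \ref{defn:spectral_measure},
\[
    \int x^m \, \mu_{\mbf{M}_N}^{\mbf{a}_N^{(s')}}(dx) = \Tr\!\left(\mbf{M}_N^m\, \mbf{a}_N^{(s')}{\mbf{a}_N^{(s')}}^*\right),
\]
and expanding $\mbf{M}_N^m = (\mbf{\Xi}_N + \mbf{A}_N)^m$ via the spectral decomposition $\mbf{A}_N = \sum_{s=1}^r \theta_s \mbf{a}_N^{(s)}{\mbf{a}_N^{(s)}}^*$ writes this moment as a finite sum of terms of the form
\[
    \theta_{s_1}\cdots\theta_{s_j}\,\Tr\!\left(\mbf{a}_N^{(s')}{\mbf{a}_N^{(s')}}^*\mbf{\Xi}_N^{k_0}\mbf{a}_N^{(s_1)}{\mbf{a}_N^{(s_1)}}^*\mbf{\Xi}_N^{k_1}\cdots\mbf{a}_N^{(s_j)}{\mbf{a}_N^{(s_j)}}^*\mbf{\Xi}_N^{k_j}\right),
\]
where $j \geq 0$, $s_1, \ldots, s_j \in [r]$, and $k_0, \ldots, k_j \geq 0$ with $k_0 + \cdots + k_j = m - j$.

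Each such trace fits the left-hand side of Proposition \ref{prop:isotropic_global_law} applied to the single-matrix family $\mcal{Z}_N = (\mbf{\Xi}_N)$ with monomials $p_t(z) = z^{k_t}$. Choosing the eigenvectors $(\mbf{a}_N^{(s)})_{s=1}^r$ orthonormal, the inner-product constants $c_s$ from the law collapse to the Kronecker deltas $\delta_{s_t, s_{t-1}}$ under the cyclic convention $s_0 = s_{j+1} = s'$, so the only terms with nonzero limit are those with $s_1 = \cdots = s_j = s'$. The almost sure form of the law applies since $\xi_N \gg N^\varepsilon$; a union bound over the countably many such terms appearing across all $m \in \N$ then yields, on a single full-probability event,
\[
    \lim_{N\to\infty} \int x^m \, \mu_{\mbf{M}_N}^{\mbf{a}_N^{(s')}}(dx) = \sum_{j=0}^{m} \theta_{s'}^j \sum_{\substack{k_0, \ldots, k_j \geq 0 \\ k_0 + \cdots + k_j = m - j}} \prod_{t=0}^{j} \tau(z^{k_t}) \qquad \forall m \in \N,
\]
where $\tau$ denotes the semicircle moment functional of variance $\sigma^2$.

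To identify the right-hand side with the $m$-th moment of $\mu_{\theta_{s'}}$, I would observe that the identical expansion performed for a Wigner matrix $\mbf{W}_N \deq \Wig(N, \sigma^2)$ in place of $\mbf{\Xi}_N$ produces the same expression (a Wigner matrix is just the special case $b_N = N$ of an RBM, so Proposition \ref{prop:isotropic_global_law} applies verbatim). The limit thus coincides with the $m$-th moment of the limiting spectral measure of $\mbf{W}_N + \theta_{s'}\mbf{a}_N^{(s')}{\mbf{a}_N^{(s')}}^*$ with respect to $\mbf{a}_N^{(s')}$, which is $\mu_{\theta_{s'}}$ by \cite[Proposition 2]{Noi21}.

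Finally, $\|\mbf{M}_N\| \leq \|\mbf{\Xi}_N\| + \|\mbf{A}_N\|$ is almost surely bounded by a deterministic constant for $N$ large via the strong convergence $\|\mbf{\Xi}_N\| \to 2\sigma$ \cite[Corollary 2.18]{BvH22}, so the measures $\mu_{\mbf{M}_N}^{\mbf{a}_N^{(s')}}$ almost surely lie in a common compact interval. Combined with the compact support (hence moment determinacy) of $\mu_{\theta_{s'}}$, the almost sure moment convergence above upgrades to weak convergence almost surely. The main work is largely bookkeeping---the expansion, the union bound, and the identification step---since the analytic heart of the argument is already encapsulated in Proposition \ref{prop:isotropic_global_law}.
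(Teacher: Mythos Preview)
Your proof is correct and follows essentially the same approach as the paper: expand $(\mbf{\Xi}_N + \mbf{A}_N)^m$, apply Proposition \ref{prop:isotropic_global_law} together with the orthonormality of the $\mbf{a}_N^{(s)}$ to reduce to the rank-one Wigner computation, invoke \cite[Proposition 2]{Noi21} to identify the limit as $\mu_{\theta_{s'}}$, and conclude via moment determinacy of the compactly supported limit. The only cosmetic differences are that you write out the intermediate combinatorial formula and the tightness argument explicitly, whereas the paper leaves both implicit.
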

\begin{proof}
Since the eigenvectors $(\mbf{a}_N^{(s)})_{s \in [r]}$ are orthonormal, Proposition \ref{prop:isotropic_global_law} tells us that 
\begin{align*}
    \lim_{N \to \infty} \left\langle \mbf{M}_N^m \mbf{a}_N^{(s')}, \mbf{a}_N^{(s')} \right\rangle &= \lim_{N \to \infty} \Tr\left(\mbf{a}_N^{(s')} {\mbf{a}_N^{(s')}}^* \left(\mbf{\Xi}_N + \sum_{s = 1}^r \theta_s \mbf{a}_N^{(s)}{\mbf{a}_N^{(s)}}^*\right)^m\right) \\
    &\aseq \lim_{N \to \infty} \Tr\left(\mbf{a}_N^{(s')} {\mbf{a}_N^{(s')}}^* \left(\mbf{W}_N + \theta_{s'} \mbf{a}_N^{(s')}{\mbf{a}_N^{(s')}}^*\right)^m\right) \\
    &= \lim_{N \to \infty} \left\langle \left(\mbf{W}_N + \theta_{s'} \mbf{a}_N^{(s')}{\mbf{a}_N^{(s')}}^*\right)^m \mbf{a}_N^{(s')}, \mbf{a}_N^{(s')} \right\rangle,
\end{align*}
where $\mbf{W}_N$ is a Wigner matrix. Thus, the moments of $\mu_{\mbf{M}_N}^{\mbf{a}_N^{(s')}}$ and the moments of $\mu_{\mbf{W}_N + \theta_{s'} \mbf{a}_N^{(s')}{\mbf{a}_N^{(s')}}^*}^{\mbf{a}_N^{(s')}}$ converge to the same deterministic sequence $(m_1, m_2, \ldots) \in \R^\N$ almost surely. Noiry proved that the spectral measure $\mu_{\mbf{W}_N + \theta_{s'} \mbf{a}_N^{(s')}{\mbf{a}_N^{(s')}}^*}^{\mbf{a}_N^{(s')}}$ converges weakly almost surely to $\mu_{\theta_{s'}}$ \cite[Proposition 2]{Noi21}: the finiteness of the limiting moments $m_i < \infty$ further implies that the moments of $\mu_{\theta_{s'}}$ are given by the same sequence $(m_1, m_2, \ldots)$. Being compactly supported, the distribution $\mu_{\theta_{s'}}$ is uniquely determined by its moments. Consequently, the moment convergence of $\mu_{\mbf{M}_N}^{\mbf{a}_N^{(s')}}$ to $(m_1, m_2, \ldots)$ implies that $\mu_{\mbf{M}_N}^{\mbf{a}_N^{(s')}}$ converges weakly almost surely to $\mu_{\theta_{s'}}$.
\end{proof}

To prove the eigenvalue BBP transition \ref{spiked_edge} for $\mbf{M}_N$, we use the classical Weyl interlacing inequality specialized to a rank one perturbation \cite[Corollary 4.3.9]{HJ13}.

\begin{prop}[Weyl]\label{prop:interlacing}
Let $\mbf{H}_N \in \matn_N(\C)$ be Hermitian and $\mbf{v}_N \in \sphr^{N-1}$. If $\theta > 0$, then 
\begin{align*}
    \lambda_k(\mbf{H}_N) &\leq \lambda_k(\mbf{H}_N + \theta \mbf{v}_N\mbf{v}_N^*) \leq \lambda_{k+1}(\mbf{H}_N), \qquad k = 1, \ldots, N-1; \\
    \lambda_N(\mbf{H}_N) &\leq \lambda_N(\mbf{H}_N + \theta \mbf{v}_N\mbf{v}_N^*).
\end{align*}
\end{prop}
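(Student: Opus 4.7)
The plan is to deduce both inequalities from the Courant--Fischer min--max characterization of the eigenvalues of a Hermitian matrix. Recall that for any Hermitian $\mbf{A} \in \matn_N(\C)$ with eigenvalues $\lambda_1(\mbf{A}) \leq \cdots \leq \lambda_N(\mbf{A})$,
\[
    \lambda_k(\mbf{A}) = \min_{\substack{S \subseteq \C^N \\ \dim S = k}} \max_{\substack{\mbf{x} \in S \\ \norm{\mbf{x}} = 1}} \langle \mbf{A}\mbf{x}, \mbf{x}\rangle = \max_{\substack{S \subseteq \C^N \\ \dim S = N-k+1}} \min_{\substack{\mbf{x} \in S \\ \norm{\mbf{x}} = 1}} \langle \mbf{A}\mbf{x}, \mbf{x}\rangle.
\]

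The lower bound $\lambda_k(\mbf{H}_N) \leq \lambda_k(\mbf{H}_N + \theta\mbf{v}_N\mbf{v}_N^*)$ (which also covers the case $k = N$) is immediate: since $\theta > 0$ and $\mbf{v}_N\mbf{v}_N^*$ is positive semidefinite, $\langle (\mbf{H}_N + \theta\mbf{v}_N\mbf{v}_N^*)\mbf{x}, \mbf{x}\rangle \geq \langle \mbf{H}_N\mbf{x}, \mbf{x}\rangle$ for every $\mbf{x}$, and monotonicity of either variational formula in $\mbf{A}$ promotes this pointwise inequality to an inequality on the eigenvalues.

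For the upper bound $\lambda_k(\mbf{H}_N + \theta\mbf{v}_N\mbf{v}_N^*) \leq \lambda_{k+1}(\mbf{H}_N)$ with $1 \leq k \leq N-1$, I would exhibit a $k$-dimensional subspace $S'$ on which the quadratic form of $\mbf{H}_N + \theta\mbf{v}_N\mbf{v}_N^*$ is dominated by $\lambda_{k+1}(\mbf{H}_N)$ and then invoke the min--max formula. Writing the spectral decomposition $\mbf{H}_N = \sum_{j = 1}^N \lambda_j(\mbf{H}_N)\mbf{h}_N^{(j)}{\mbf{h}_N^{(j)}}^*$, the natural candidate is
\[
    S = \mathrm{span}\left(\mbf{h}_N^{(1)}, \ldots, \mbf{h}_N^{(k+1)}\right) \cap \{\mbf{v}_N\}^\perp,
\]
which has $\dim S \geq (k+1) + (N-1) - N = k$ by a standard dimension count. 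Any $k$-dimensional $S' \subseteq S$ then works: a unit vector $\mbf{x} \in S'$ is orthogonal to $\mbf{v}_N$, so $\langle \theta\mbf{v}_N\mbf{v}_N^*\mbf{x}, \mbf{x}\rangle = 0$, and lies in the span of the bottom $k+1$ eigenvectors of $\mbf{H}_N$, giving $\langle \mbf{H}_N\mbf{x}, \mbf{x}\rangle \leq \lambda_{k+1}(\mbf{H}_N)$.

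This is the classical Weyl interlacing inequality for a rank-one positive perturbation, so I anticipate no substantive obstacle beyond correctly pairing the two variational formulae with the dimension count. As an alternative, one could deduce the proposition from the general Weyl inequality $\lambda_{k+j-N}(\mbf{A} + \mbf{B}) \leq \lambda_k(\mbf{A}) + \lambda_j(\mbf{B})$ applied with $\mbf{B} = \theta\mbf{v}_N\mbf{v}_N^*$ and $j = N-1$ (noting $\lambda_{N-1}(\mbf{B}) = 0$), but the self-contained min--max argument above is equally short and exposes exactly where the rank-one structure enters.
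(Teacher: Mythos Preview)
Your argument is correct and is the standard min--max proof of Weyl's interlacing inequality for a rank-one positive perturbation. Note, however, that the paper does not actually prove this proposition: it is stated as a classical result and attributed to \cite[Corollary 4.3.9]{HJ13}, so there is no ``paper's proof'' to compare against. Your self-contained derivation is a fine substitute for the citation.
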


We can now give the 

\begin{proof}[Proof of Theorem \ref{thm:spiked_rbm}]
First, assume that the $(\theta_s)_{s = 1}^r$ are distinct. We prove the result by induction on $r$. The base case of $r = 1$ corresponds to a rank one perturbation $\mbf{M}_N = \mbf{\Xi}_N + \theta_1\mbf{a}_N^{(1)}{\mbf{a}_N^{(1)}}^*$. Without loss of generality, we may assume that $\theta_1 > 0$. Applying Lemma \ref{lem:spectral_measure} to the explicit form of the spectral measure in Example \ref{eg:spectral_measure_vector}, we obtain the almost sure weak convergence
\begin{equation}\label{eq:BBP_rank_one}
\begin{aligned}
    \lim_{N \to \infty} \mu_{\mbf{M}_N}^{\mbf{a}_N^{(1)}} &= \lim_{N \to \infty} \sum_{k = 1}^N \left|\left\langle \mbf{a}_N^{(1)}, \mbf{m}_N^{(k)} \right\rangle\right|^2 \delta_{\lambda_k(\mbf{M}_N)} \\
    &\aseq \frac{\indc{|x| \leq 2\sigma}}{2\pi}\frac{\sqrt{4\sigma^2 - x^2}}{\theta_1^2 + \sigma^2 - \theta_1 x} \, dx + \indc{|\theta_1| > \sigma} \left(1-\frac{\sigma^2}{\theta_1^2}\right)\delta_{\theta_1 + \frac{\sigma^2}{\theta_1}}(dx).
\end{aligned}
\end{equation}
The strong convergence of $\mbf{\Xi}_N$ and the interlacing inequality imply that there is at most one outlier:
\begin{align*}
    \lim_{N \to \infty} \lambda_1(\mbf{M}_N) &\aseq -2\sigma;\\
    \lim_{N \to \infty} \lambda_{N-1}(\mbf{M}_N) &\aseq 2\sigma.
\end{align*}
In particular, if $\theta_1 \in (\sigma, \infty)$, then the atom located at $\theta_1 + \frac{\sigma^2}{\theta_1} > 2\sigma$ in the limiting spectral measure \eqref{eq:BBP_rank_one} must originate from $\lambda_N(\mbf{M}_N)$, whence
\begin{align*}
    \lim_{N \to \infty} \lambda_N(\mbf{M}_N) &\aseq \theta_1 + \frac{\sigma^2}{\theta_1}; \\
    \lim_{N \to \infty} \left|\left\langle \mbf{a}_N^{(1)}, \mbf{m}_N^{(N)} \right\rangle\right|^2 &\aseq 1-\frac{\sigma^2}{\theta_1^2}.
\end{align*}
Note that the interlacing inequality also implies that $\lambda_N(\mbf{M}_N)$ is a nondecreasing function of $\theta_1 > 0$. If $\theta_1 \in (0, \sigma]$, then we can use this monotonicity to deduce that
\[
    \lim_{N\to \infty} \lambda_N(\mbf{M}_N) \aseq 2\sigma.
\]
The lack of an atom at $2\sigma$ in the limiting spectral measure \eqref{eq:BBP_rank_one} in this case then implies
\[  
    \lim_{N \to \infty} \left|\left\langle \mbf{a}_N^{(1)}, \mbf{m}_N^{(N)} \right\rangle\right|^2 \aseq 0.
\]

Now assume the result for some $r \geq 1$ and consider a rank $r+1$ perturbation
\begin{align*}
    \mbf{M}_N &= \mbf{\Xi}_N + \sum_{s = 1}^{r+1} \theta_s\mbf{a}_N^{(s)}{\mbf{a}_N^{(s)}}^* \\
    &= \left(\mbf{\Xi}_N + \sum_{s = 1}^{r} \theta_s\mbf{a}_N^{(s)}{\mbf{a}_N^{(s)}}^*\right) + \theta_{r+1}\mbf{a}_N^{(r+1)}{\mbf{a}_N^{(r+1)}}^* \\
    &= \wtilde{\mbf{M}}_N + \theta_{r+1}\mbf{a}_N^{(r+1)}{\mbf{a}_N^{(r+1)}}^*.
\end{align*}
We recall the assumption $\theta_1 < \cdots < \theta_{r+1}$ are nonzero and the notation
\begin{align*}
    L_{-\sigma} &= \#(\{s \in [r]: \theta_s < -\sigma\}); \\
    L_{+\sigma} &= \#(\{s \in [r]: \theta_s > \sigma\}).
\end{align*}    
Without loss of generality, we may assume that $\theta_{r+1} > 0$. By the induction hypothesis, we know that $\wtilde{\mbf{M}}_N = \sum_{k = 1}^N \lambda_k(\wtilde{\mbf{M}}_N) \wtilde{\mbf{m}}_N^{(k)} \wtilde{\mbf{m}}_N^{(k)}{}^*$ satisfies
\begin{equation}\label{eq:BBP_induction_hypothesis}
\begin{aligned}
    \lim_{N \to \infty} \lambda_k(\wtilde{\mbf{M}}_N) &\aseq \theta_k + \frac{\sigma^2}{\theta_k} < -2\sigma, \qquad \forall k \in [L_{-\sigma}];\\
    \lim_{N \to \infty} \lambda_{L_{-\sigma} + 1}(\wtilde{\mbf{M}}_N) &\aseq -2\sigma; \\
    \lim_{N \to \infty} \lambda_{N + 1 - k}(\wtilde{\mbf{M}}_N) &\aseq \theta_{r+1-k} + \frac{\sigma^2}{\theta_{r+1-k}} > 2 \sigma, \qquad \forall k \in [L_{+\sigma}]; \\
    \lim_{N \to \infty} \lambda_{N - L_{+\sigma}}(\wtilde{\mbf{M}}_N) &\aseq 2 \sigma.
\end{aligned}
\end{equation}
Once again, we use Lemma \ref{lem:spectral_measure} to compute the limiting spectral measure
\begin{equation}\label{eq:BBP_spectral_measure}
\begin{aligned}
    \lim_{N \to \infty} \mu_{\mbf{M}_N}^{\mbf{a}_N^{(s)}} &= \lim_{N \to \infty} \sum_{k = 1}^N \left|\left\langle \mbf{a}_N^{(s)}, \mbf{m}_N^{(k)} \right\rangle\right|^2 \delta_{\lambda_k(\mbf{M}_N)} \\
    &\aseq \frac{\indc{|x| \leq 2\sigma}}{2\pi}\frac{\sqrt{4\sigma^2 - x^2}}{\theta_s^2 + \sigma^2 - \theta_s x} \, dx + \indc{|\theta_s| > \sigma} \left(1-\frac{\sigma^2}{\theta_s^2}\right)\delta_{\theta_s + \frac{\sigma^2}{\theta_s}}(dx)
\end{aligned}
\end{equation}
for any $s \in [r+1]$. Since the $(\theta_s)_{s=1}^{r+1}$ are distinct and the function $x \mapsto x + \frac{\sigma^2}{x}$ is injective for $|x| \geq \sigma$, the interlacing inequality applied to the rank one perturbation $\mbf{M}_N = \wtilde{\mbf{M}}_N + \theta_{r+1}\mbf{a}_N^{(r+1)}{\mbf{a}_N^{(r+1)}}^*$ and the convergences in \eqref{eq:BBP_induction_hypothesis} imply that the weak convergence in \eqref{eq:BBP_spectral_measure} holds only if
\begin{align*}
    \lim_{N \to \infty} \lambda_k(\mbf{M}_N) &\aseq \theta_k + \frac{\sigma^2}{\theta_k}, \qquad \forall k \in [L_{-\sigma}]; \\
    \lim_{N \to \infty} \left|\left\langle \mbf{a}_N^{(s)}, \mbf{m}_N^{(k)} \right\rangle\right|^2 &\aseq \indc{s = k}\left(1-\frac{\sigma^2}{\theta_k^2}\right), \qquad \forall (s, k) \in  [r+1] \times [L_{-\sigma}]; \\
    \lim_{N \to \infty} \lambda_{L_{-\sigma} + 1}(\mbf{M}_N) &\aseq -2\sigma; \\
    \lim_{N \to \infty} \left|\left\langle \mbf{a}_N^{(s)}, \mbf{m}_N^{(k)} \right\rangle\right|^2 &\aseq 0, \qquad \forall s \in [r+1] \text{ if } \theta_{k} \in [-\sigma, 0);\\
    \lim_{N \to \infty} \lambda_{N}(\mbf{M}_N) &\aseq \begin{dcases}
        \theta_{r+1} + \frac{\sigma^2}{\theta_{r+1}} &\text{if } \theta_{r+1} \in (\sigma, \infty); \\ 
        2 \sigma &\text{if } \theta_{r+1} \in (0, \sigma];
    \end{dcases} \\
    \lim_{N \to \infty} \left|\left\langle \mbf{a}_N^{(s)}, \mbf{m}_N^{(N)} \right\rangle\right|^2 &\aseq \begin{dcases} \indc{s = r+1}\left(1-\frac{\sigma^2}{\theta_{r+1}^2}\right) &\forall s \in [r+1] \text{ if } \theta_{r+1} \in (\sigma, \infty); \\ 0, &\forall s \in [r+1] \text{ if } \theta_{r+1} \in (0, \sigma].
    \end{dcases}
\end{align*}
Roughly speaking, we work our way in from the left edge of the spectrum using the trap $\lambda_1(\wtilde{\mbf{M}}_N) \leq \lambda_1(\mbf{M}_N)$. Having established the right edge of the spectrum using the lower bound $\lambda_N(\wtilde{\mbf{M}}_N) \leq \lambda_N(\mbf{M}_N)$, we can repeat the argument above and work our way in from the other direction using the trap $\lambda_{N-1}(\mbf{M}_N) \leq \lambda_N(\wtilde{\mbf{M}}_N)$. Thus,
\begin{align*}
    \lim_{N \to \infty} \lambda_{N-k}(\mbf{M}_N) &\aseq \theta_{r+1-k} + \frac{\sigma^2}{\theta_{r+1-k}}, \qquad \forall k \in [L_{+\sigma}]; \\
    \lim_{N \to \infty} \left|\left\langle \mbf{a}_N^{(s)}, \mbf{m}_N^{(N - k)} \right\rangle\right|^2 &\aseq \indc{s = r + 1 - k}\left(1-\frac{\sigma^2}{\theta_{r+1-k}^2}\right), \qquad \forall (s, k) \in  [r+1] \times [L_{+\sigma}]; \\
    \lim_{N \to \infty} \lambda_{N - 1 - L_{+\sigma}}(\mbf{M}_N) &\aseq 2\sigma; \\
    \lim_{N \to \infty} \left|\left\langle \mbf{a}_N^{(s)}, \mbf{m}_N^{(N - k)} \right\rangle\right|^2 &\aseq 0, \qquad \forall s \in [r+1] \text{ if } \theta_{r+1-k} \in (0, \sigma],
\end{align*}
which completes the induction step.

To prove the general case, we no longer assume that the nontrivial eigenvalues $\theta_1 \leq \cdots \leq \theta_r$ of the perturbation $\mbf{A}_N = \sum_{s = 1}^r \theta_s \mbf{a}_N^{(s)}{\mbf{a}_N^{(s)}}^*$ are necessarily simple. So, let $\Theta_1 < \cdots < \Theta_q$ be the distinct values of $(\theta_s)_{s=1}^r$ and $m_t = \dim(\ker(\Theta_t \mbf{I}_N - \mbf{A}_N))$ the multiplicity of $\Theta_t$. A standard continuity argument using the Hoffman-Wielandt inequality \cite[Corollary 6.3.8]{HJ13} proves the eigenvalue BBP transition \ref{spiked_edge} for $\mbf{M}_N$ from our earlier result in the case of distinct $(\theta_s)_{s = 1}^r$ (see, for example, \cite[Section 6.2.3]{BGN11}). From there, we can once again use the convergence of the spectral measure \eqref{eq:BBP_spectral_measure} to deduce that for any $(s, t) \in [r] \times [q]$,
\begin{align*}
    \lim_{N \to \infty} \sum_{k=\sum_{i = 1}^{t-1} m_i + 1}^{\sum_{i = 1}^{t} m_i} \left|\left\langle \mbf{a}_N^{(s)}, \mbf{m}_N^{(k)} \right\rangle\right|^2 &\aseq \indc{s \in \left[\sum_{i = 1}^{t-1} m_i + 1, \sum_{i = 1}^{t} m_i\right]}\left(1-\frac{\sigma^2}{\Theta_t^2}\right) \text{ if } \Theta_t < -\sigma; \\
    \lim_{N \to \infty} \sum_{k=\sum_{i = 1}^{t-1} m_i + 1}^{\sum_{i = 1}^{t} m_i} \left|\left\langle \mbf{a}_N^{(s)}, \mbf{m}_N^{(k)} \right\rangle\right|^2 &\aseq 0 \text{ if } \Theta_t \in [-\sigma, 0); \\
    \lim_{N \to \infty} \sum_{k = \sum_{i = t + 1}^q m_i + 1}^{\sum_{i = t}^{q} m_i} \left|\left\langle \mbf{a}_N^{(s)}, \mbf{m}_N^{(N+1-k)} \right\rangle\right|^2 &\aseq \indc{s \in \left[\sum_{i = 1}^{t-1} m_i + 1, \sum_{i = 1}^{t} m_i\right]}\left(1-\frac{\sigma^2}{\Theta_t^2}\right) \text{ if } \Theta_t > \sigma; \\
    \lim_{N \to \infty} \sum_{k = \sum_{i = t + 1}^q m_i + 1}^{\sum_{i = t}^{q} m_i} \left|\left\langle \mbf{a}_N^{(s)}, \mbf{m}_N^{(N+1-k)} \right\rangle\right|^2 &\aseq 0 \text{ if } \Theta_t \in (0, \sigma].
\end{align*}
Note that this already proves the nonalignment in the eigenvector BBP transition \ref{spiked_eigenvectors_II} and the second part of \ref{spiked_eigenvectors_I}; however, we do not have access to the projections of the individual eigenvectors $\mbf{m}_N^{(k)}/\mbf{m}_N^{(N+1-k)}$ onto the $\mbf{a}_N^{(s)}$. Summing the alignment over $s \in [r]$, we obtain the weaker statement
\begin{align*}
    \lim_{N \to \infty} \sum_{k=\sum_{i = 1}^{t-1} m_i + 1}^{\sum_{i = 1}^{t} m_i} \snorm{P_{\ker(\Theta_t \mbf{I}_N - \mbf{A}_N)}(\mbf{m}_N^{(k)})}_2^2 &\aseq m_t\left(1-\frac{\sigma^2}{\Theta_t^2}\right) \text{ if } \Theta_t < -\sigma; \\
    \lim_{N \to \infty} \sum_{k = \sum_{i = t + 1}^q m_i + 1}^{\sum_{i = t}^{q} m_i} \snorm{P_{\ker(\Theta_t \mbf{I}_N - \mbf{A}_N)}(\mbf{m}_N^{(N+1-k)})}_2^2 &\aseq m_t\left(1-\frac{\sigma^2}{\Theta_t^2}\right) \text{ if } \Theta_t > \sigma.
\end{align*}
Nevertheless, one can repeat the perturbation argument in \cite[Section 5]{Cap13} to once again deduce the result from the earlier case of distinct $(\theta_s)_{s=1}^r$. We conclude that
\begin{align*}
    \lim_{N \to \infty} \snorm{P_{\ker(\Theta_t \mbf{I}_N - \mbf{A}_N)}(\mbf{m}_N^{(k)})}_2^2 &\aseq 1-\frac{\sigma^2}{\Theta_t^2}, \qquad \forall k \in \left[\sum_{i = 1}^{t-1} m_i + 1, \sum_{i = 1}^{t} m_i\right] \text{ if } \Theta_t < -\sigma; \\
    \lim_{N \to \infty} \snorm{P_{\ker(\Theta_t \mbf{I}_N - \mbf{A}_N)}(\mbf{m}_N^{(N+1-k)})}_2^2 &\aseq 1-\frac{\sigma^2}{\Theta_t^2}, \qquad \forall k \in \left[\sum_{i = t + 1}^q m_i + 1, \sum_{i = t}^q m_i\right] \text{ if } \Theta_t > \sigma,
\end{align*}
which establishes \ref{spiked_eigenvectors_I}.
\end{proof}

\bibliographystyle{amsalpha}
\bibliography{master_bib}

\end{document}